\theoremstyle{plain}
\newtheorem{theorem}{Theorem}[section]
\newtheorem{lemma}[theorem]{Lemma}
\newtheorem{corollary}[theorem]{Corollary}
\theoremstyle{definition}
\newtheorem{definition}[theorem]{Definition}
\theoremstyle{remark}
\newtheorem{remark}[theorem]{Remark}
\newtheorem*{remark*}{Remark}
\numberwithin{equation}{section}
\newcommand{\dosfilas}[2]{
  \ldelim[{2}{2mm}& #1 &\rdelim]{2}{2mm} \\
  & #2 & &  & &
}
\newcommand\D{{\mathcal D}}
\newcommand\F{{\mathcal F}}
\newcommand\G{{\mathcal G}}
\newcommand\U{{\mathcal U}}
\newcommand\Hh{{\mathcal H}}
\newcommand\CC{{\mathbb C}}
\newcommand\RR{{\mathbb R}}
\newcommand\ZZ{{\mathbb Z}}
\newcommand\NN{{\mathbb N}}
\newcommand\x{{\theta}}
\newcommand\Sh{\mbox{\Large $\mathfrak {s}$}}
   \title{New examples of Krall-Meixner and Krall-Hahn polynomials, with applications to the construction of exceptional Meixner and Laguerre polynomials.
   \footnote{Partially supported by PGC2018-096504-B-C31
(FEDER(EU)/Ministerio de Ciencia e Innovaci\'on-Agencia Estatal de Investigaci\'on),
FQM-262 and Feder-US-1254600 (FEDER(EU)/Jun\-ta de Anda\-lu\-c\'ia).}}
   \author{Antonio J. Dur\'{a}n\\
     \footnotesize
        \  Departamento de An\'{a}lisis Matem\'{a}tico.
       Universidad de Sevilla \\
       \footnotesize Apdo (P. O. BOX) 1160. 41080 Sevilla. Spain.
   duran@us.es \\
          \ \ }
   \date{}
\begin{document}
   \maketitle

\bigskip

\begin{abstract}
We construct new examples of Krall discrete orthogonal polynomials, i.e., orthogonal polynomials with respect to a measure which are also eigenfunctions of a higher order difference operator.  The new examples include the orthogonal polynomials with respect to the measures obtained from the Meixner measure $\rho _{a,c}$ and Hahn measure $\rho _{a,b,N}$ by removing a finite number of their mass points when the parameter $c$ of the Meixner measure and $a$ or $b$ of the Hahn measure are positive integers. From the new Krall-Meixner families we construct new families of exceptional Meixner and Laguerre polynomials.
\end{abstract}

\section{Introduction}
The most important families of orthogonal polynomials are the classical and classical discrete. Besides the orthogonality, they are also common eigenfunctions of a second order differential or difference operator, respectively (the $q$-classical families are not considered in this paper).

Eighty years ago H.L. Krall raised the issue of orthogonal polynomials which are also common eigenfunctions of a higher order differential operator. He obtained a complete classification for the case of a differential operator of order four (\cite{Kr2}). After his pioneer work, orthogonal polynomials which are also common eigenfunctions of higher order differential operators are usually called Krall polynomials.  Since the eighties a lot of effort has been devoted to find Krall polynomials (\cite{koekoe}, \cite{koe} \cite{koekoe2}, \cite{L1}, \cite{L2}, \cite{GrH1}, \cite{GrHH}, \cite{GrY}, \cite{Plamen1}, \cite{Plamen2}, \cite{Zh}, the list is by no mean exhaustive).

Discrete versions of the Krall problem appeared at the beginning of the nineties. Richard Askey explicitly posed in 1991 (see page 418 of \cite{BGR}) the problem of finding orthogonal polynomials which are also common eigenfunctions of a higher order difference operator (discrete Krall polynomials) of the form
\begin{equation}\label{hodo}
\sum_{l=s}^rh_l\Sh _l, \quad s\le r, s,r\in \ZZ,
\end{equation}
where $h_l$ are polynomials and $\Sh_l$ stands for the shift operator $\Sh_l(p)=p(x+l)$.

But the first examples
of discrete Krall polynomials needed more than twenty years to be constructed: a huge amount of families of Krall discrete orthogonal polynomials were introduced by the author by mean of a certain Christoffel transform of the classical discrete measures of Charlier, Meixner, Krawtchouk and Hahn (see \cite{du0}, \cite{du1}, \cite{dudh}, \cite{DdI}, \cite{DdI2}). A Christoffel transform is a transformation which consists in multiplying a measure $\mu$ by a polynomial $r$. For the Meixner case, the Krall-Meixner measures introduced by the author are the following:
\begin{equation}\label{roi}
\rho_{a,c}^\F=\sum _{x=0}^\infty \prod _{f\in F_1}(x-f)\prod _{f\in F_2}(x+c+f)\frac{a^x\Gamma(x+c)}{x!}\delta_x,
\end{equation}
with $\F=(F_1,F_2)$, where $F_i$, $i=1,2$, are finite sets of positive integers, $a\not =0,1$ and $c\not =0,-1,-2,\cdots$.

In the terminology introduced by Duistermaat and Gr\"unbaum \cite{DG} (see also \cite{GrH1}, \cite{GrH3})),  Krall and Krall discrete polynomials are examples of the so-called bispectral polynomials, because with these families $(q_n(x))_n$ of polynomials are associated two  operators with respect to which they are eigenfunctions: one acting in the discrete variable $n$ (the three term recurrence relation associated to the orthogonality with respect to a measure in the real line) and the other in the continuous variable $x$.

The purpose of this paper is to introduce some new examples of Krall-Meixner and Krall-Hahn measures. In particular, these new families include the measures obtained from the Meixner measures $\rho _{a,c}$ and Hahn measures $\rho _{a,b,N}$ by removing a finite number of their mass points when the parameter $c$ of the Meixner measure and $a$ or $b$ of the Hahn measure are positive integers. These measures can not be obtained by any Christoffel transform applied to a Meixner or Hahn measures.

We consider the Meixner case in the Section \ref{sect3}. Surprisingly enough, the new Krall-Meixner measures appear by taking limits in the Christoffel transforms (\ref{roi}) when $c$ goes to $\hat c\in \{0,-1,-2,\cdots\}$, under the additional assumption
$$
\{0,1,\cdots , -\hat c\}\subset F_1\cup (-\hat c-F_2).
$$
We then get measures of the form
\begin{equation}\label{nui}
\sum _{x\in \NN\setminus F_1} \prod _{h\in \Hh}(x-h)a^x\delta_x,
\end{equation}
where $\Hh=[F_1\cup (-\hat c-F_2)\setminus \{0,1,\cdots , -\hat c\}]\cup [F_1\cap (-\hat c-F_2)]$. The measure (\ref{nui}) is positive when $\prod _{h\in \Hh}(x-h)\ge 0$, $x\in \NN\setminus F_1$.
We also construct explicitly the ortogonal polynomials with respect to the measure (\ref{nui}) (see Theorem \ref{1th}).

The situation is similar in the Hahn case which we consider in the Section \ref{sect4}.

These new examples of Krall discrete orthogonal polynomials are also interesting by the following reason. As it has been shown in \cite{duch}, \cite{dume} and \cite{duha}, exceptional discrete polynomials can be constructed by applying duality (in the sense of \cite{Leo}) to Krall-Charlier, Krall-Meixner or Krall-dual Hahn orthogonal polynomials. Passing then to the limit, exceptional Hermite, Laguerre and Jacobi polynomials can be constructed.
Exceptional and exceptional discrete orthogonal polynomials $p_n$, $n\in X\varsubsetneq \NN$, are complete orthogonal polynomial systems with respect to a positive measure which in addition are eigenfunctions of a second order differential or difference operator, respectively. They extend the  classical families of Hermite, Laguerre and Jacobi or the classical discrete families of Charlier, Meixner and Hahn.

The last thirteen years have seen a great deal of activity in the area  of exceptional orthogonal polynomials (see, for instance,
\cite{Be,BK,duch,dume,duha,GFGM,GUKM1,GUKM2} (where the adjective \textrm{exceptional} for this topic was introduced),  \cite{GUGM,GQ,MR,OS3,OS4,STZ}, and the references therein).
The most apparent difference between classical or classical discrete orthogonal polynomials and their exceptional counterparts
is that the exceptional families have gaps in their degrees, in the
sense that not all degrees are present in the sequence of polynomials (as it happens with the classical families) although they form a complete orthonormal set of the underlying $L^2$ space defined by the orthogonalizing positive measure. This
means in particular that they are not covered by the hypotheses of Bochner's and Lancaster's classification theorems for classical and classical discrete orthogonal polynomials, respectively (see \cite{B} or \cite{La}).

We complete this paper by constructing new examples of exceptional Meixner and exceptional Laguerre polynomials using the new examples of Krall-Meixner polynomials (see Sections \ref{sect5} and \ref{sect6}, respectively). The examples already known of exceptional Laguerre polynomials are orthogonal with respect to weights of the form
\begin{equation}\label{lai}
\frac{x^\alpha e^{-x} dx}{\tau^2(x)},\quad x\in (0,+\infty ),
\end{equation}
where $\alpha\not =0$ and $\tau$ is a polynomial which does not vanish in $[0,+\infty)$. In the Section \ref{sect6} of this paper we construct the first exceptional Laguerre polynomials associated to a weight of the form (\ref{lai}) with $\alpha=0$.

In a subsequent paper, we will study de dual-Hahn case which will generate new exceptional Hahn and Jacobi polynomials. The dual-Hahn case is even much more interesting than the Meixner or Hahn cases because, besides the discrete parameters collected by the elements of the finite sets of positive integers $F_1$ and $F_2$, the new families also contain an arbitrary number of continuous deformation parameters (see \cite{GGMx}, where the first examples of exceptional Legendre polynomials depending of an arbitrary number of continuous deformation parameters have been introduced). But  this reason makes the dual-Hahn case much more difficult, because it can not be managed passing to the limit as the Meixner or Hahn cases.

\section{Preliminaries}

Let $\mu$ be a real function defined in a set $X$ of real numbers which it is either an interval (bounded or not) or countable. We also denote by  $\mu$ the following measure: (a) if $X$ is an interval then $\mu$ is the measure with density $\mu(x)$ with respect to the Lebesgue measure; if $X$ is countable then $\mu$ is the discrete measure with masses $\mu(x)$ at $x$: $\mu=\sum_{x\in X}\mu(x) \delta_x$.

\begin{lemma}\label{lemi} Let $\mu_s$, $s\in \NN$, and $\mu$ be real functions defined in a set $X$ of real numbers which it is either an interval (bounded or not) or countable. Assume that
\begin{enumerate}
\item for $n\in \NN$, $\vert x^n\mu_s(x)\vert \le f_n(x)$, $x\in X$, and $f_n\in L^1(X)$.
\item For $n\in \NN$, $x^n\mu(x)\in L^1(X)$.
\item For $x\in X$, $\lim \mu_s(x)=\mu(x)$.
\end{enumerate}
Assume that $(p_n^s)_n$, $(p_n)_n$ are the sequences of monic orthogonal polynomials with respect to $\mu_s$ and $\mu$, respectively. Then for $n\in \NN$,
$\lim_{s\to \infty} p_n^s(z)=p_n(z)$, $z\in \CC$, $\lim_{s\to \infty} \langle p_n^s,p_n^s\rangle _{\mu_s}=\langle p_n,p_n\rangle_\mu$. Moreover if the measures $\mu_s$ and $\mu$ are positive and $(p_n^s)_n$ are complete in $L^2(\mu_s)$
then $(p_n)_n$ are also complete in $L^2(\mu)$.
\end{lemma}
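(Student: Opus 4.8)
The plan is to prove the three conclusions in order: convergence of the monic orthogonal polynomials, convergence of the norms, and preservation of completeness. The guiding principle is that the monic orthogonal polynomials and their norms are given by \emph{ratios of Hankel determinants} built from the moments $m_k^s=\int x^k\,d\mu_s$ and $m_k=\int x^k\,d\mu$, and that hypotheses (1)--(3) are exactly what is needed to pass moments to the limit via dominated convergence.

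\textbf{Step 1: Moment convergence.} First I would observe that by hypothesis (3) we have pointwise convergence $x^k\mu_s(x)\to x^k\mu(x)$ on $X$, and by hypothesis (1) the dominating functions $f_k\in L^1(X)$ are independent of $s$. Hence the dominated convergence theorem (in the integral or the sum form, according to whether $X$ is an interval or countable) gives $\lim_{s\to\infty} m_k^s=m_k$ for every $k\in\NN$; hypothesis (2) guarantees that the limiting moments $m_k$ are finite, so $\mu$ genuinely has a sequence of monic orthogonal polynomials. Since $\mu_s$ and $\mu$ need not be positive, I would note that what we actually need is that the Hankel determinants $\Delta_n^s=\det(m_{i+j}^s)_{i,j=0}^{n}$ are nonzero for every $s$ (so that $p_n^s$ exists) and that $\Delta_n=\det(m_{i+j})_{i,j=0}^{n}\neq 0$ as well — the existence of $(p_n)_n$ as stated in the hypothesis supplies the latter, and we may restrict attention to those $n$ for which everything is defined.

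\textbf{Step 2: Convergence of $p_n^s$ and of the norms.} The monic orthogonal polynomial $p_n^s$ has the classical closed form as a quotient of determinants,
\[
p_n^s(z)=\frac{1}{\Delta_{n-1}^s}\det\begin{pmatrix} m_0^s & m_1^s & \cdots & m_n^s\\ \vdots & & & \vdots\\ m_{n-1}^s & m_n^s & \cdots & m_{2n-1}^s\\ 1 & z & \cdots & z^n\end{pmatrix},
\]
and similarly $\langle p_n^s,p_n^s\rangle_{\mu_s}=\Delta_n^s/\Delta_{n-1}^s$. Since determinants are polynomials in their entries, hence continuous, Step 1 immediately yields $\Delta_k^s\to\Delta_k$ for all $k$, and therefore $p_n^s(z)\to p_n(z)$ for every $z\in\CC$ (with the convergence uniform on compact sets, being convergence of polynomials of fixed degree $n$ with convergent coefficients) and $\langle p_n^s,p_n^s\rangle_{\mu_s}=\Delta_n^s/\Delta_{n-1}^s\to\Delta_n/\Delta_{n-1}=\langle p_n,p_n\rangle_\mu$. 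One should check the nondegeneracy $\Delta_{n-1}\neq 0$ so the quotients are legitimate in the limit, which again is part of the standing assumption that $(p_n)_n$ exists.

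\textbf{Step 3: Completeness.} Assume now $\mu_s,\mu\geq 0$ and that $(p_n^s)_n$ is complete in $L^2(\mu_s)$ for each $s$; I want $(p_n)_n$ complete in $L^2(\mu)$. Completeness of the orthogonal polynomial system in $L^2(\mu)$ is equivalent to the measure $\mu$ being \emph{determinate} (when $X$ is unbounded) together with $\mu$ not being, say, a finitely supported measure with more points than polynomials — but the cleanest route is: $(p_n)_n$ is complete in $L^2(\mu)$ iff $\mu$ is the \emph{unique} measure (up to the usual normalization) having the moments $(m_k)_k$, i.e.\ $\mu$ is determinate, in which case polynomials are dense. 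Here I would argue directly that a nonzero $g\in L^2(\mu)$ orthogonal to all $p_n$ would force, via the moment problem, a contradiction with the completeness of $(p_n^s)$ for large $s$; concretely, completeness of $(p_n^s)_n$ says $\mu_s$ is determinate, determinacy is preserved under the moment convergence $m_k^s\to m_k$ together with the uniform domination in (1) (a Carleman-type bound $\sum (m_{2k}^s)^{-1/2k}=\infty$ transferring to the limit, using $m_{2k}^s\leq \int f_{2k}$), hence $\mu$ is determinate and $(p_n)_n$ is complete. In the cases of this paper $X\subset\NN$ and the measures have exponentially decaying masses, so determinacy is automatic and this step is essentially free; the general formulation above is what makes the lemma usable in both the bounded (Hahn) and unbounded (Meixner/Laguerre) settings.

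\textbf{Main obstacle.} The genuinely delicate point is Step 3: convergence of orthogonal polynomials and norms is soft and follows from the determinant formulas, but completeness is \emph{not} a pointwise-limit phenomenon, and one must invoke a determinacy/moment-problem argument — controlling the limit measure's moment problem by the uniform integrable majorants $f_n$ from hypothesis (1). This is precisely why hypothesis (1) is stated with $s$-independent dominating functions rather than merely assuming each $\mu_s$ has finite moments: it is what lets Carleman's condition (or, when $X$ is bounded, automatic determinacy) pass to the limit and thereby transfer completeness from the approximants to $\mu$.
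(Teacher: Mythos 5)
Your Steps 1 and 2 are correct and are essentially what the paper's one-line proof means by ``an easy consequence of dominated convergence'': the moments converge by dominated convergence, and since the monic orthogonal polynomials and their norms are ratios of Hankel determinants, hence continuous functions of finitely many moments, the convergence $p_n^s\to p_n$ and $\langle p_n^s,p_n^s\rangle_{\mu_s}\to\langle p_n,p_n\rangle_{\mu}$ follows (the paper leaves the determinant bookkeeping implicit, but your write-up is the standard way to make it precise).

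The genuine gap is in Step 3, and you have correctly located the delicate point but your argument for it does not work. First, the claim that completeness of $(p_n^s)_n$ in $L^2(\mu_s)$ implies that $\mu_s$ is determinate is false: polynomials are dense in $L^2(\mu)$ if and only if $\mu$ is determinate \emph{or} is an N-extremal solution of an indeterminate moment problem, so completeness does not force determinacy. Second, even granting determinacy of each $\mu_s$, determinate measures need not satisfy Carleman's condition, and the bound $m_{2k}^s\le\int f_{2k}$ cannot transfer a Carleman estimate to the limit because hypothesis (1) places no growth restriction on $\int f_{2k}$ as $k\to\infty$. In fact the completeness assertion fails in the stated generality: take $\mu$ an absolutely continuous indeterminate measure (e.g.\ a log-normal weight), so that polynomials are \emph{not} dense in $L^2(\mu)$ since $\mu$ is not N-extremal, and set $\mu_s=\mu\cdot\mathbf{1}_{\{|x|\le s\}}$; then hypotheses (1)--(3) hold with $f_n=|x|^n\mu$, each $\mu_s$ is compactly supported, hence determinate, hence $(p_n^s)_n$ is complete, yet $(p_n)_n$ is not. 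The workable route is the one you mention only in passing: verify directly that the \emph{limit} measure is determinate --- in all applications of this lemma $\mu$ is supported on $\NN$ (or on a finite set) with masses bounded by a polynomial times $a^x$, $0<a<1$, so Carleman's condition for $\mu$ itself is immediate --- and then invoke Riesz's theorem that determinacy implies density of the polynomials. That argument does not actually use the completeness of the $(p_n^s)_n$; what it does use is information about $\mu$ beyond hypotheses (1)--(3), which both your proof and the lemma as stated are silently assuming.
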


\begin{proof}
It is an easy consequence of the Lebesgue's dominated convergence Theorem.
\end{proof}

For a discrete measure $\rho=\sum_{x\in \NN} a_x\delta _x$ and $u\in \NN$, we denote by $\rho(x-u)$ the translate measure
\begin{equation}\label{mtr}
\rho(x-u)=\sum_{x=u}^\infty a_{x-u}\delta_x.
\end{equation}

\bigskip
We finish the Preliminaries will a couple of definitions.

Given a finite set of numbers $F=\{f_1,\cdots, f_k\}$, $f_i<f_j$ if $i<j$, we denote by $V_F$ the Vandermonde determinant defined by
\begin{equation}\label{defvdm}
V_F=\prod_{1=i<j=k}(f_j-f_i).
\end{equation}
For an nonnegative integer $u$ and a couple of finite sets $I,J$ of positive integers, we define the finite set of integers $\Hh (u,I,J)$ as
\begin{equation}\label{spmh}
\Hh (u,I,J)=[[I\cup (u-J)]\setminus \{0,1,2,\cdots ,u\}]\cup [I\cap (u-J)].
\end{equation}

\subsection{Finite sets and pair of finite sets of positive integers.}\label{sfspi}
Let $F$ be a finite set of positive integers (we always arrange the elements of $F$ in increasing order).

Consider the set $\Upsilon$  formed by all finite sets of positive
integers:
\begin{equation*}
\Upsilon=\{F:\mbox{$F$ is a finite set of positive integers}\} .
\end{equation*}
We consider the involution $I$ in $\Upsilon$ defined by
\begin{align}\label{dinv}
I(F)=\{1,2,\cdots, \max F\}\setminus \{\max F-f,f\in F\}.
\end{align}
For $F=\emptyset$, we define $\max F=\min F=-1$, and so $I(\emptyset)=\emptyset$.

The definition of $I$ implies that $I^2=Id$.

The set $I(F)$ will be denoted by $G$: $G=I(F)$. Notice that
$$
\max F=\max G,\quad m=\max F-k+1,
$$
where $k$ and $m$ are the number of elements of $F$ and $G$,
respectively.

For a finite set $F=\{f_1,\cdots ,f_{n_F}\}$, $f_i<f_{i+1}$, of
positive integers, we define the number $s_F$ by
\begin{equation}\label{defs0}
s_F=\begin{cases} 1,& \mbox{if $F=\emptyset$},\\
n_F+1,&\mbox{if $F=\{1,2,\cdots , n_F\}$},\\
\min \{s\ge 1:s<f_s\}, & \mbox{if $F\not =\{1,2,\cdots n_F\}$}.
\end{cases}
\end{equation}
If $F$ is a finite set of nonnegative integers with $0\in F$, we set $s_F=s_{F\setminus \{0\}}$, and define the set $F_{\Downarrow}$ of positive integers by
\begin{equation}\label{defffd}
F_{\Downarrow}=\begin{cases} \emptyset,& \mbox{if $F=\emptyset$ or $F=\{1,2,\cdots , n_F\}$,}\\
\{f_{s_F}-s_F,\cdots , f_{n_F}-s_F\},& \mbox{if $F\not =\{1,2,\cdots ,
k\}$ and $0\not \in F$},\\
(F\setminus \{0\})_{\Downarrow},& \mbox{if $0 \in F$}.
\end{cases}
\end{equation}
Notice that if $F\not =\emptyset$ then
\begin{equation}\label{lum}
F=\begin{cases}\{1,\cdots , s_F-1\}\cup (s_F+F_{\Downarrow}), &\mbox{if $0\not \in F$ and $s_F>1$}\\
\{0,1,\cdots , s_F-1\}\cup (s_F+F_{\Downarrow}), &\mbox{if $0\in F$.}\end{cases}
\end{equation}

From now on, $\F=(F_1,F_2)$ will denote a pair of finite sets of
positive integers. We denote by $k_j$ the number of elements of $F_j$,
$j=1,2$, and $k=k_1+k_2$ is the number of elements of $\F$. One of
the components of $\F$, but not both, can be the empty set.

We associate to $\F$ the nonnegative integer $u_\F$  and the infinite set of nonnegative integers $\sigma_\F$ defined by
\begin{align}\label{defuf}
u_\F&=\sum_{f\in F_1}f+\sum_{f\in
F_2}f-\binom{k_1+1}{2}-\binom{k_2}{2},\\\label{defsf}
\sigma _\F&=\{u_\F,u_\F+1,u_\F+2,\cdots \}\setminus \{u_\F+f,f\in
F_1\}.
\end{align}
The infinite set $\sigma_\F$ will be the set of indices for the exceptional Meixner or Laguerre polynomials associated to $\F$.

For a pair $\F=(F_1,F_2)$ we denote by
$\F_{\Downarrow}$ the pair of finite sets of positive integers
defined by
\begin{equation}\label{deffd}
\F_{\Downarrow}=((F_1)_{\Downarrow},F_2),
\end{equation}
where $(F_1)_{\Downarrow}$ is defined by (\ref{defffd}). We also define
\begin{equation}\label{defs0f}
s_\F=s_{F_1}
\end{equation}
where the number $s_{F_1}$ is defined by (\ref{defs0}).

\subsection{Meixner and Laguerre polynomials}
We include here basic definitions and facts about Meixner and Laguerre polynomials, which we will need in the following Sections.

For $a\not =0, 1$ we write $(m_{n}^{a,c})_n$ for the sequence of Meixner polynomials defined by
\begin{equation}\label{Mxpol}
m_{n}^{a,c}(x)=\frac{a^n}{(1-a)^n}\sum _{j=0}^n a^{-j}\binom{x}{j}\binom{-x-c}{n-j}
\end{equation}
(we have taken a slightly different normalization from the one used in \cite{KLS}, pp, 234-7).

For $0<\vert a\vert<1$ and $c\not =0,-1,-2,\ldots $, they are orthogonal with respect to the measure
\begin{equation*}\label{MXw}
\rho_{a,c}=\sum _{x=0}^\infty \frac{a^{x}\Gamma(x+c)}{x!}\delta _x,
\end{equation*}
and
\begin{equation}\label{norme}
\langle m_n^{a,c},m_n^{a,c}\rangle =\frac{a^{n}\Gamma(n+c)}{n!(1-a)^{2n+c}}.
\end{equation}
When $c\in \{0,-1,-2,\ldots \}$, the Meixner polynomials are not orthogonal. In this case, the sequence of Meixner polynomials is actually \textsl{packing} two sequences of orthogonal polynomials. Indeed, on the one hand, the finite sequence of polynomial $m_n^{a,c}$, $n=0,\cdots ,-c$, is the sequence of Krawtchouk polynomials. On the other hand, for $n\ge -c+1$, we have
$$
m_n^{a,c}(x)=\frac{\prod_{j=0}^{-c}(x-j)}{\prod_{j=0}^{-c}(n-j)}m_{n+c-1}^{a,2-c}(x+c-1),
$$
and since $2-c$ is a positive integer, $(m_{n}^{a,2-c}(x))_n$, is a sequence of Meixner orthogonal polynomials.

For $\alpha\in\mathbb{R}$, we write $(L_n^\alpha )_n$ for the sequence of Laguerre polynomials
\begin{equation}\label{deflap}
L_n^{\alpha}(x)=\sum_{j=0}^n\frac{(-x)^j}{j!}\binom{n+\alpha}{n-j}
\end{equation}
(see  \cite{KLS}, pp, 241-244).

For $\alpha\neq-1,-2,\ldots$, they are orthogonal with respect to a measure $\mu_{\alpha}=\mu_{\alpha}(x)dx$. This measure is positive
only when $\alpha>-1$ and then
$$
\mu_{\alpha}(x) =x^\alpha e^{-x}, x>0.
$$
One can obtain Laguerre polynomials from Meixner polynomials using the limit
\begin{equation}\label{blmel}
\lim_{a\to 1}(a-1)^nm_n^{a,c}\left(\frac{x}{1-a}\right)=L_n^{c-1}(x)
\end{equation}
see \cite{KLS}, p. 243 (take into account that we are using for the
Meixner polynomials a different normalization to that in \cite{KLS}). The previous limit is uniform in compact sets of $\CC$.

\section{New Meixner-Krall orthogonal polynomials}\label{sect3}
In \cite{du0}, we conjecture how to construct Krall discrete orthogonal polynomials by using Christoffel transforms of the classical discrete measure. A method (using the new concept of $\D$-operators) was developed in \cite{du1} to prove those conjectures. The method was refined in \cite{DdI,DdI2}, where we completed the proof of all the conjectures in \cite{du0}.

In the Meixner case, we consider the Christoffel transform $\rho_{a,d}^\F$ of the Meixner measure defined as follows: for $0<\vert a\vert <1$, $d\not \in \{0,-1,-2,\cdots \}$ and a pair $\F=(F_1,F_2)$ of finite sets $F_i$, $i=1,2$, of positive integers,
\begin{equation}\label{mqs}
\rho_{a,d}^\F=\sum _{x=0}^\infty \prod _{f\in F_1}(x-f)\prod _{f\in F_2}(x+d+f)\frac{a^x\Gamma(x+d)}{x!}\delta_x.
\end{equation}
It was proved in \cite{DdI} that the orthogonal polynomials with respect to the measure $\rho_{a,d}^\F$ (when there exist) are also eigenfunctions of a higher order difference operator of the form (\ref{hodo}).

The most interesting case is when $\rho_{a,d}^\F$ is a positive measure, i.e., when $d$ and $\F$ satisfy
$$
\prod _{f\in F_1}(x-f)\prod _{f\in F_2}(x+d+f)\Gamma(x+d)\ge 0, \quad x\in \NN.
$$
The case $d\in \{0,-1,-2,\cdots \}$ was neither considered in \cite{du0} nor in the subsequence papers because the Meixner measure
$$
\sum _{x=0}^\infty \frac{a^x\Gamma(x+d)}{x!}\delta_x
$$
is not well defined since the Gamma function has poles at the non-positive integers.

Surprisingly enough, for certain pairs $\F$, the measure (\ref{mqs}) makes sense even when $d\in \{0,-1,-2,\cdots \}$. Indeed,
for $d=\hat c\in \{0,-1,-2,\cdots \}$,  and a pair $\F=(F_1,F_2)$ of finite sets $F_i$, $i=1,2$, of positive integers satisfying
\begin{equation}\label{hf2}
\{0,1,2,\cdots , -\hat c\}\subset F_1\cup (-\hat c-F_2),
\end{equation}
define the finite set $\Hh$ of integers
\begin{equation}\label{defH}
\Hh=[F_1\cup (-\hat c-F_2)\setminus \{0,1,2,\cdots , -\hat c\}]\cup [F_1\cap (-\hat c-F_2)].
\end{equation}
Actually $\Hh$ depends on $\hat c$, $F_1$ and $F_2$ (in fact, it is the finite set we denote by $\Hh(-\hat c,F_1,F_2)$ in the preliminaries (\ref{spmh})), although to simplify the notation we will not write explicitly this dependency.

Consider now the measure
\begin{equation}\label{defnu}
\nu ^{a}_{\hat c;\F}=\sum _{x=0;x\not \in F_1}^\infty \prod _{h\in \Hh}(x-h)a^x\delta_x.
\end{equation}

We next prove that the measure $\nu ^{a}_{\hat c;\F}$ is actually a limit of measures of the form (\ref{mqs}) when $d\to \hat c$. We will see later on that the orthogonal polynomials with respect to $\nu ^{a}_{\hat c;\F}$ (when there exist) are also eigenfunctions of a higher order difference operator of the form (\ref{hodo}).

\begin{lemma}\label{1le} Let $\hat c\in \{0,-1,-2,\cdots \}$,  and let $\F=(F_1,F_2)$ be a pair of finite sets $F_i$, $i=1,2$, of positive integers satisfying (\ref{hf2}).
For a sequence of real numbers $\hat c_s\not\in \{0,-1,-2,\cdots \}$ with $\lim_{s\to \infty}\hat c_s=\hat c$ we have
\begin{equation}\label{limit}
\lim_{s\to \infty}\rho_{a,\hat c_s}^\F(x)=\nu ^{a}_{\hat c;\F}(x),\quad x\in \NN,
\end{equation}
where the measures $\rho_{a,\hat c_s}^\F$ and $\nu ^{a}_{\hat c;\F}$ are defined by (\ref{mqs}) and (\ref{defnu}), respectively.
Moreover, if the measure $\nu ^{a}_{\hat c;\F}$ is positive then the numbers $\hat c_s\not\in \{0,-1,-2,\cdots \}$ can be taken so that the measures
$\rho_{a,\hat c_s}^\F$ are positive as well.
\end{lemma}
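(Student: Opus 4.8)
The plan is to prove the pointwise convergence (\ref{limit}) by computing, for each fixed $x\in\NN$, the limit of the mass $\rho^\F_{a,\hat c_s}(x)$ and matching it with $\nu^a_{\hat c;\F}(x)$; the matching will rest on a short cancellation argument keyed to the definition (\ref{defH}) of $\Hh$. Write $m=-\hat c\in\NN\cup\{0\}$ and $\epsilon_s=\hat c_s+m\to0$. If $x\in F_1$ the factor $\prod_{f\in F_1}(x-f)$ vanishes, so $\rho^\F_{a,\hat c_s}(x)=0$ for every $s$, in agreement with the fact that $\nu^a_{\hat c;\F}$ is supported off $F_1$. From now on assume $x\in\NN\setminus F_1$.

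I would then split according to the behaviour of $\Gamma(x+d)$ at $d=\hat c$. If $x>m$, then $\Gamma(x+\hat c_s)\to\Gamma(x-m)=(x-m-1)!$, and using $(x-m-1)!/x!=1/\prod_{j=0}^{m}(x-j)$ together with $\prod_{f\in F_2}(x-m+f)=\prod_{g\in-\hat c-F_2}(x-g)$ one gets
\begin{equation}\label{auxr}
\lim_{s\to\infty}\rho^\F_{a,\hat c_s}(x)=a^x\,\frac{\prod_{f\in F_1}(x-f)\prod_{g\in(-\hat c-F_2)\setminus\{x\}}(x-g)}{\prod_{j\in\{0,1,\dots,-\hat c\}\setminus\{x\}}(x-j)}.
\end{equation}
If $x\le m$, then $\Gamma(x+\hat c_s)$ has a simple pole at $\hat c_s=\hat c$, with $\Gamma(x+\hat c_s)\sim(-1)^{m-x}/((m-x)!\,\epsilon_s)$; on the other hand (\ref{hf2}) together with $x\notin F_1$ forces $x\in-\hat c-F_2$, i.e.\ $m-x\in F_2$, so exactly one factor of $\prod_{f\in F_2}(x+\hat c_s+f)$ — namely $x+\hat c_s+(m-x)=\epsilon_s$ — tends to $0$ and cancels the pole. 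Passing to the limit and using the elementary identity $\prod_{j\in\{0,1,\dots,m\}\setminus\{x\}}(x-j)=(-1)^{m-x}x!\,(m-x)!$ one arrives at (\ref{auxr}) again; since for $x>m$ the deletions of $x$ in (\ref{auxr}) are vacuous, (\ref{auxr}) holds verbatim in both cases.

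It then remains to verify the identity
\begin{equation*}
\frac{\prod_{f\in F_1}(x-f)\prod_{g\in(-\hat c-F_2)\setminus\{x\}}(x-g)}{\prod_{j\in\{0,1,\dots,-\hat c\}\setminus\{x\}}(x-j)}=\prod_{h\in\Hh}(x-h).
\end{equation*}
By (\ref{hf2}), every $j\in\{0,1,\dots,m\}$ with $j\ne x$ lies in $F_1\cup(-\hat c-F_2)$, so the denominator factor $(x-j)$ cancels one numerator copy; after cancellation a factor $(x-j)$ survives exactly when $j\in F_1\cap(-\hat c-F_2)$, whereas the numerator factors with index outside $\{0,1,\dots,m\}$ (necessarily in $F_1$ when $>m$, since $-\hat c-F_2\subseteq\{\dots,m-1\}$, and in $-\hat c-F_2$ when $<0$) are untouched; finally $x$ enters neither side, because $x\in\NN\setminus F_1$ implies $x\notin\Hh$. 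Hence the surviving factors are precisely $(x-h)$, $h\in\Hh$ (equivalently $\Hh(-\hat c,F_1,F_2)$ of (\ref{spmh})), each once, and (\ref{limit}) follows. For the positivity statement, note that (\ref{auxr}) shows $\rho^\F_{a,d}(x)$ extends continuously across $d=\hat c$ for every $x\in\NN\setminus F_1$, with value the nonzero number $a^x\prod_{h\in\Hh}(x-h)$; if $\nu^a_{\hat c;\F}$ is positive this value is $>0$, so $\rho^\F_{a,d}(x)>0$ for $d$ near $\hat c$. The radius is uniform over the infinitely many $x>m$: for $|d-\hat c|<1$ one has $x+d+f>0$ for all $f\in F_2$ and $x+d>0$, so $\prod_{f\in F_2}(x+d+f)\Gamma(x+d)>0$ and the sign of $\rho^\F_{a,d}(x)$ equals that of $a^x\prod_{f\in F_1}(x-f)$, independent of $d$ and equal to the positive value at $d=\hat c$; only the finitely many $x\le m$ need individual neighbourhoods. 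Taking $\delta>0$ to be the minimum of $1$ and those finitely many radii (so that $0<|d-\hat c|<\delta$ also keeps $d\notin\{0,-1,-2,\dots\}$), the measure $\rho^\F_{a,d}$ is positive whenever $0<|d-\hat c|<\delta$, and one picks $\hat c_s$ in this range.

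I expect the only genuinely delicate step to be the evaluation when $x\le m$: one must check that the pole of $\Gamma(x+d)$ and the zero of $\prod_{f\in F_2}(x+d+f)$ at $d=\hat c$ are both simple — which is exactly what (\ref{hf2}) provides — and that the resulting finite value collapses onto the same ratio (\ref{auxr}) obtained in the case $x>m$. Once these two computations are reconciled, the cancellation identifying the surviving roots with $\Hh$ is immediate, and everything else reduces to continuity and the factorial identities used above.
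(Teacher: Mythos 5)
Your proof is correct and follows essentially the same route as the paper: the same case split ($x\in F_1$, $x>-\hat c$, $x\le-\hat c$), the same simple-pole/simple-zero cancellation between $\Gamma(x+\hat c_s)$ and the factor $x+\hat c_s+(-\hat c-x)$ guaranteed by (\ref{hf2}), and the same positivity strategy (a uniform sign argument for all large $x$ plus individual neighbourhoods for the finitely many remaining points). Your explicit ratio formula and the combinatorial cancellation identifying the surviving roots with $\Hh$ is just a cleaner packaging of the computations the paper carries out directly.
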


\begin{proof}
The limit is straightforward for $y\in F_1$, because then $\rho_{a,\hat c_s}^\F(y)=\nu_{a,\hat c}^\F(y)=0$.

For a nonnegative integer $y\ge -\hat c+1$, using (\ref{hf2}) and (\ref{defH}), we have for the mass at $x=y$
\begin{align*}
\lim _{s\to +\infty}\rho_{a,\hat c_s}^\F(y)
&=\lim _{s\to +\infty}\prod _{f\in F_1}(y-f)\prod _{f\in F_2}(y+\hat c_s+f)\frac{a^y\Gamma(y+\hat c_s)}{y!}\\
&=\prod _{f\in F_1}(y-f)\prod _{f\in F_2}(y+\hat c+f)\frac{a^y\Gamma(y+\hat c)}{y!}\\
&=\prod _{h\in \Hh}(y-h)a^y\frac{y(y-1)\cdots (y+\hat c)\Gamma(y+\tilde c)}{y!}\\
&=\prod _{h\in \Hh}(y-h)a^y=\nu_{a,\hat c}^\F(y).
\end{align*}
Assume now that the nonnegative integer $y$ satisfies $0\le y\le -\hat c$
and $y\not \in F_1$. Then $y\in -\hat c-F_2$, and hence there is $f_y\in F_2$ with
$y+\hat c+f_y=0$. This gives
$$
\lim _{s\to +\infty }(y+\hat c_s+f_y)\Gamma(y+\hat c_s)=\frac{(-1)^{y+\hat c}}{(-y-\hat c)!}.
$$
And then
\begin{align*}
\lim _{s\to +\infty}\rho_{a,\hat c_s}^\F(y)&=\lim _{s\to +\infty }\prod _{f\in F_1}(y-f)\prod _{f\in F_2}(y+\hat c_s+f)\frac{a^y\Gamma(y+\hat c_s)}{y!}\\
&=\lim _{s\to +\infty }\prod _{f\in F_1}(y-f)\prod _{f\in F_2;f\not =f_y}(y+\hat c_s+f)\frac{a^y}{y!}(y+\hat c_s+f_y)\Gamma(y+\hat c_s)\\
&=\prod _{h\in \Hh}(y-h)\frac{a^y}{y!}y(y-1)\cdots 1 \cdot 1\cdot 2\cdots (y+\hat c)\frac{(-1)^{y+\hat c}}{(-y-\hat c)!}\\
&=\prod _{h\in \Hh}(y-h)a^y=\nu_{a,\hat c}^\F(y).
\end{align*}

Assume now that $\nu ^{a}_{\hat c;\F}$ is positive. If we write $M=\max (F_1\cup (-\hat c-F_2)\cup \{-\hat c+1\})$, then for each number $\hat c_s\not\in \{0,-1,-2,\cdots \}$ with $\hat c_s>\hat c-1$, it is easy to check that the masses $\rho_{a,\hat c_s}^\F(x)$ are always positive for $x\ge M$. Since for $x\in \{0,1,\cdots, M\}\setminus (\Hh\cup F_1)$, $\nu ^{a}_{\hat c;\F}(x)>0$ and $\{0,1,\cdots, M\}\setminus (\Hh\cup F_1)$ is finite, taking into account that we have already proved the limit (\ref{limit}), we can take  numbers $\hat c_s\not\in \{0,-1,-2,\cdots \}$ with $\hat c_s>\hat c-1$, $\lim_{s\to +\infty}\hat c_s=\hat c$ and such that $\rho_{a,\hat c_s}^\F(x)>0$, $x\in \{0,1,\cdots, M\}\setminus (\Hh\cup F_1)$. Finally, for $x\ge 0$ and $x\in \Hh\cup F_1$, we have that $x\in F_1$; indeed, if $x\not \in F_1$, then $x\in \Hh$, but $x\not \in -\hat c-F_2$, because otherwise $x\le -\hat c$ and according with the definition of $\Hh$, $x$ must then be negative. Hence $x\in F_1$  and then $\rho_{a,\hat c_s}^\F(x)=0$. Hence the measures $\rho_{a,\hat c_s}^\F$ are positive.
\end{proof}

Before going on with the construction of a sequence of orthogonal polynomials with respect to the measure $\nu ^{a}_{\hat c;\F}$, we make four remarks.

\begin{remark}\label{remp}
Firstly we show that we can assume that $\hat c\in \{-1,-2,\cdots \}$ because the measure $\nu ^{a}_{0;\F}$ can always be written in the form (\ref{mqs}) (up to a linear change of variable): \textit{there exist a pair $\U=(U_1,U_2)$ of finite sets of positive integers and  numbers $d, s, C$, with $d\in \{1,2,\cdots \}$, $s\in \NN$ and $C>0$ such that
$$
\nu ^{a}_{0;\F}=C\rho_{a,d}^\U(x-s).
$$}
Indeed, the condition (\ref{hf2}) then says that $0\in F_1\cup (-F_2)$. Assume first that $0\in F_1$. Consider the finite set of positive integers $(F_1)_\Downarrow$ (see (\ref{defffd})). We then use (\ref{lum}):
$$
F_1=\{0,1,\cdots, s_{F_1}-2,s_{F_1}-1\}\cup (s_{F_1}+(F_1)_\Downarrow).
$$
We also have $\Hh=[F_1\setminus \{0\}]\cup (-F_2)$ (\ref{defH}). It is then easy to check that the measure $\nu ^{a}_{0;\F}$ can be written in the form
\begin{align*}
\nu ^{a}_{0;\F}&=\sum _{x=s_{F_1}}^\infty (x-1)\cdots (x-s_{F_1}+1)\prod _{f\in s_{F_1}+(F_1)_\Downarrow}(x-f)\prod _{f\in F_2}(x+f)a^x\delta_x\\
&=\sum _{x=s_{F_1}}^\infty \prod _{f\in s_{F_1}+(F_1)_\Downarrow}(x-f)\prod _{f\in F_2}(x+f)\frac{a^x\Gamma (x)}{(x-s_{F_1})!}\delta_x.
\end{align*}
If $0\not \in F_2$, the identity (\ref{mtr}) shows that $\nu ^{a}_{0;\F}=a^{s_{F_1}}\rho_{a,s_{F_1}}^\U(x-s_{F_1})$, where $\U=(U_1,U_2)$, and the finite sets $U_i$, $i=1,2$, of positive integers are given by $U_1=(F_1)_\Downarrow$, $U_2=F_2$. If $0\in F_2$, since
$$
F_2=\{0,1,\cdots, s_{F_2}-2,s_{F_2}-1\}\cup (s_{F_2}+(F_2)_\Downarrow),
$$
we have
\begin{align*}
\nu ^{a}_{0;\F}&=\sum _{x=s_{F_1}}^\infty \prod _{f\in s_{F_1}+(F_1)_\Downarrow}(x-f)\prod _{f\in F_2}(x+f)\frac{a^x\Gamma (x)}{(x-s_{F_1})!}\delta_x\\
&=\sum _{x=s_{F_1}}^\infty \prod _{f\in s_{F_1}+(F_1)_\Downarrow}(x-f)\prod _{f\in s_{F_2}+(F_2)_\Downarrow}(x+f)\frac{a^x\Gamma (x+s_{F_2})}{(x-s_{F_1})!}\delta_x.
\end{align*}
And again the identity (\ref{mtr}) shows that $\nu ^{a}_{0;\F}=a^{s_{F_1}}\rho_{a,s_{F_1}+s_{F_2}}^{\U}(x-s_{F_1})$, where $\U=(U_1,U_2)$, and the finite sets $U_i$, $i=1,2$, of positive integers are given by $U_1=(F_1)_\Downarrow$, $U_2=(F_2)_\Downarrow$.

Finally, if $0\not \in F_1$ then $0\in F_2$, and we also have $\Hh=[F_1\cup (-F_2)] \setminus \{0\}$ (\ref{defH}).
If we denote $U_2=(F_2)_\Downarrow$, then we straightforwardly have (see (\ref{lum}))
\begin{align*}
\nu ^{a}_{0;\F}&=\sum _{x=0}^\infty \prod _{f\in F_1}(x-f)\prod _{f\in U_2}(x+s_{F_2}+f)\frac{a^x\Gamma (x+s_{F_2})}{x!}\delta_{x}=\rho_{a,s_{F_2}}^{(F_1,U_2)}.
\end{align*}
\end{remark}

\begin{remark}\label{rems}
Secondly we show that we can also assume that $F_1$ and $F_2$ are finite sets of positive integers: \textit{For $\hat c\in \{0,-1,-2,\cdots \}$, if we assume that $0\in F_1\cup F_2$, then either there exist a pair $\U=(U_1,U_2)$ of finite sets of positive integers and numbers  $s\in \NN$ and $\hat d\in \{-1,-2,\cdots \}$
such that
$$
\nu ^{a}_{\hat c;\F}=a^{s}\nu ^a_{\hat d,\U}(x-s),
$$
or there exist a pair $\U=(U_1,U_2)$ of finite sets of nonnegative integers and a number  $s\in \NN$
such that
$$
\nu ^{a}_{\hat c;\F}=a^{s}\nu ^a_{0,\U}(x-s).
$$
}
Assume first that $0\in F_1$ and $s_{F_1}\le -\hat c$. Proceeding as before, it is not difficult to check that
by taking $U_1=(F_1)_\Downarrow$ and $U_2=F_2$ then
$$
\nu_{\hat c,\F}^a=a^{s_{F_1}}\nu_{\hat c+s_{F_1},\U}^a(x-s_{F_1}).
$$
And $U_1$ is a finite set of positive integers.

If $0\in F_1$ and $s_{F_1}> -\hat c$, we then have
$$
\nu_{\hat c,\F}^a=a^{-\hat c}\nu_{0,\U}^a(x+\hat c),
$$
where $\U=(U_1,U_2)$, and the finite sets $U_i$, $i=1,2$, of nonnegative integers are given by
$$
U_1=\{0,1,\cdots, s_{F_1}-1+\hat c\}\cup (s_{F_1}+\hat c+(F_1)_\Downarrow),\quad  U_2=F_2.
$$
Similarly, if $0\not \in F_1$ and $0\in F_2$, we have for $s_{F_2}\le -\hat c$
$$
\nu_{\hat c,\F}^a=\nu_{\hat c+s_{F_2},\U}^a,
$$
where $\U=(U_1,U_2)$, and the finite sets $U_i$, $i=1,2$, of positive integers are given by
$U_1=F_1$, $U_2=(F_2)_\Downarrow$. While for $s_{F_2}> -\hat c$ we have
$$
\nu_{\hat c,\F}^a=\nu_{0,\U}^a,
$$
where $\U=(U_1,U_2)$, and the finite sets $U_i$, $i=1,2$, of nonnegative integers are given by
$$
U_1=F_1,\quad U_2=\{0,1,\cdots, s_{F_2}-1+\hat c\}\cup (s_{F_2}+\hat c+(F_2)_\Downarrow).
$$
\end{remark}

\begin{remark}\label{rm1} Contrary to the previous remarks, for $\hat c\in\{-1,-2,\cdots\}$ and $F_1,F_2$ finite sets of positive integers, we show that the measure $\nu ^{a}_{\hat c;\F}$ is never of the form
$$
C\rho_{b,d}^\U(x-s),
$$
for any pair $\U=(U_1,U_2)$ of finite sets of positive integers and any numbers $b, d, s, C$, with $0<b<1$, $d\not \in \{0,-1,-2,\cdots \}$ and $s\in \ZZ$.  Assume on the contrary that
$\nu ^{a}_{\hat c;\F}=C\rho_{b,d}^\U(x-s)$.  Since $0\not \in F_1$, we deduce that $0\not \in \Hh$, and then the first point in the support of $\nu ^{a}_{\hat c;\F}$ is $0$. Since also the first point in the support of $\rho_{b,d}^\U$ is $0$, we have that $s=0$. Trivially $b=a$ and $d \in \{1,2,\cdots \}$, and so
$$
\prod _{h\in \Hh}(x-h)=(x+d-1)\cdots (x+1)\prod_{f\in U_1}(x-f)\prod_{f\in U_2}(x+d+f).
$$
From where we get that $U_1=\{h:h\in \Hh, h\ge 1\}$. Since $-\hat c\in F_1$ (it is an easy consequence of (\ref{hf2}) taking into account that $0\not \in F_2$), we conclude that $-\hat c$ is not in the support of the measure $\nu ^{a}_{\hat c;\F}$. Since we are assuming $\nu ^{a}_{\hat c;\F}=C\rho_{a,d}^\U$, we conclude that $-\hat c\in U_1$. This gives $-\hat c\in \Hh$ (because $-\hat c\ge 1$). But since $0\not \in F_2$, we have $-\hat c\not \in -\hat c-F_2$. The definition of $\Hh$ (\ref{defH}) finally implies $-\hat c\not \in \Hh$.
\end{remark}

\begin{remark}\label{rm4} We finally show that when the parameter $d$ is a positive integer, the measures obtained from the Meixner measure $\rho _{a,d}$ by removing a finite number of their mass points can be written in the form $\nu ^{a}_{\hat c;\F}$, for certain $\hat c\in \{-1,-2,\cdots \}$ and certain pair $\F=(F_1,F_2)$ of finite sets of nonnegative integers (more precisely: we always have $0\not \in F_2$, but $0\not \in F_1$ if and only if the mass point at zero is not removed from the Meixner measure). Indeed, let $A$ be a finite set of nonnegative integers, and consider the measure
$$
\sum_{x\in \NN\setminus A}\frac{a^x\Gamma(x+d)}{x!}\delta_x
$$
obtained by removing the masses at $A$ from the Meixner weight $\rho _{a,d}$, with $0<\vert a\vert <1$, $d\in \{1,2,\cdots \}$.

Write $\hat c=-\max A$, $A^c=\{b:1\le b\le -\hat c, b\not \in A\}$,
$$
X=\begin{cases}(-\hat c-A^c)\cup \{-\hat c\},& 0\not \in A,\\
(-\hat c-A^c),& 0\in A,
\end{cases}
$$
$Y=\{-\hat c+i, i=1,\cdots , d-1\}$, and finally $F_1=A$, $F_2=X\cup Y$.

From the definition of $A^c$, we easily get that the elements of $X$ are positive integers and that $\max X\le -\hat c$. Since $\min Y=-\hat c+1$, this shows that $X\cap Y=\emptyset$. We also have
\begin{align*}
-\hat c -F_2&=-\hat c -(X\cup Y)=(-\hat c -X)\cup (-\hat c-Y)\\
&=\begin{cases} A^c\cup \{-d+1,-d+2,\cdots, -1,0\}, &0\not \in A,\\
A^c\cup \{-d+1,-d+2,\cdots, -1\},& 0\in A.
\end{cases}
\end{align*}
Hence $F_1\cup (-\hat c -F_2)=\{-d+1,-d+2,\cdots, -1,0,1,2,\cdots,-\hat c\}$ and $F_1\cap (-\hat c -F_2)=\emptyset$. From where we get that (\ref{hf2}) holds and that
$$
\Hh=[[F_1\cup (-\hat c -F_2)]\setminus \{0,1,2,\cdots,-\hat c\}]\cup [F_1\cap (-\hat c -F_2)]=\{-d+1,-d+2,\cdots, -1\}.
$$
So
$$
\frac{\Gamma(x+d)}{x!}=(x+d-1)\cdots (x+1)=\prod_{h\in \Hh}(x-h).
$$
\end{remark}

\medskip

In the rest of this Section, we explicitly construct orthogonal polynomials with respect to the measure $\nu ^{a}_{\hat c;\F}$ and prove that they are also eigenfunctions of a higher order difference operator of the form (\ref{hodo}).

Given a pair $\F=(F_1,F_2)$ of finite sets of positive integers, $F_i$ with $k_i$ elements and $k=k_1+k_2$, the polynomial $\Omega _\F^{a,c}$ is defined by
\begin{equation}\label{defomegm}
\Omega _\F^{a,c}(x)=  \left|
  \begin{array}{@{}c@{}lccc@{}c@{}}
  & &&\hspace{-.9cm}{}_{1\le j\le k} \\
  \dosfilas{m_{f}^{a,c}(x+j-1) }{f\in F_1} \\
    \dosfilas{m_{f}^{1/a,c}(x+j-1)/a^{j-1}}{f\in F_2}
      \end{array}
  \right|,
\end{equation}
where the elements of $F_i$ are arranged in increasing order.

Throughout the rest of this paper, we use the following notation:
given a finite set of positive integers $F=\{f_1,\ldots , f_k\}$, the expression
\begin{equation}\label{defdosf}
  \begin{array}{@{}c@{}lccc@{}c@{}}
  &  &&\hspace{-.9cm}{}_{1\le j\le k} \\
    \dosfilas{ z_{f,j}  }{f\in F}
  \end{array}
\end{equation}
inside  of a matrix or a determinant will mean the submatrix defined by
$$
\left(
\begin{array}{cccc}
z_{f_1,1} & z_{f_1,2} &\cdots  & z_{f_1,k}\\
\vdots &\vdots &\ddots &\vdots \\
z_{f_k,1} & z_{f_k,2} &\cdots  & z_{f_k,k}
\end{array}
\right) .
$$
The determinant (\ref{defomegm}) should be understood in this form.

Using Lemma 3.4 of \cite{DdI}, we have that $\Omega _\F^{a,c}$ is a polynomial of degree $\sum_{f\in F_1}f+\sum_{f\in F_2}f-\binom{k_1}{2}-\binom{k_2}{2}$.

\begin{theorem}\label{1th} Given a negative integer $\hat c\in \{-1,-2,\cdots \}$,
let $\F=(F_1,F_2)$ be a pair of finite sets of positive integers, $F_i$ with $k_i$ elements, respectively, satisfying (\ref{hf2}). Denote as before $\Hh$ for the finite set of integers
$$
\Hh=[[F_1\cup (-\hat c-F_2)]\setminus \{0,1,2,\cdots , -\hat c\}]\cup [F_1\cap (-\hat c-F_2)].
$$
Write $c=\hat c+\max F_1+\max F_2+2$, $G_i$ for the finite set of positive integers $G_i=I(F_i)$, $i=1,2$, where $I$ is the involution defined in (\ref{dinv}), and $m_i$ for the number of elements of $G_i$. Consider the sequence of polynomials defined by
\begin{equation}\label{qusmei}
q_n^{a,c;\F}(x)=
\left|
  \begin{array}{@{}c@{}lccc@{}c@{}}
  & m_{n-j+1}^{a,c}(x-\max F_1-1)/(a-1)^{j-1} &&\hspace{-.6cm}{}_{1\le j\le m+1} \\
    \dosfilas{m_{g}^{a,2- c}(-n+j-2) }{g\in G_1} \\
    \dosfilas{m_{g}^{1/a,2- c}(-n+j-2)/a^{j-1}}{g\in G_2}
  \end{array}\right| ,
\end{equation}
where $m=m_1+m_2$. Assume that
\begin{equation}\label{hf1}
\Omega ^{a,\hat c}_\F(n)\not =0,\quad n=0,1,2,\cdots .
\end{equation}
Then the polynomials $(q_n^{a,c;\F})_n$ are eigenfunctions of a difference operator of the form (\ref{hodo}) with $-s=r=1+\sum_{i=1}^2\left(\sum_{f\in F_i}f-\binom{k_i}{2}\right)$, and are also orthogonal with respect to the measure
\begin{equation}\label{defnu2}
\nu ^{a}_{\hat c;\F}=\sum _{x=0;x\not \in F_1}^\infty \prod _{h\in \Hh}(x-h)a^x\delta_x,
\end{equation}
with norm equal to
\begin{equation}\label{nm2}
\langle q_n^{a,c;\F},q_n^{a,c;\F}\rangle=C_\F\frac{a^{n}\Gamma(n+k+\hat c)}{(1-a)^{2n+\hat c} n!}\Omega ^{a,\hat c}_\F(n)\Omega ^{a,\hat c}_\F(n+1),
\end{equation}
where $C_\F$ is a positive constant depending only on $a$ and $\F$.
\end{theorem}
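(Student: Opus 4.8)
The plan is to derive the whole theorem from the Christoffel case treated in \cite{DdI} by a limiting argument, the only genuinely new point being the identification of the determinant (\ref{qusmei}) as a limit of Christoffel--Meixner polynomials. By Lemma \ref{1le} fix a sequence $\hat c_s\notin\{0,-1,-2,\dots\}$ with $\hat c_s>\hat c-1$ and $\hat c_s\to\hat c$, so that $\rho_{a,\hat c_s}^\F(x)\to\nu^a_{\hat c;\F}(x)$ for every $x\in\NN$; when $\nu^a_{\hat c;\F}$ is positive we may moreover take each $\rho_{a,\hat c_s}^\F$ positive, and in any case we additionally require $\Omega^{a,\hat c_s}_\F(n)\ne0$ for all $n$ (possible because $\Omega^{a,d}_\F(n)$ depends continuously on $d$ and (\ref{hf1}) holds). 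The hypotheses of Lemma \ref{lemi} are then satisfied with $\mu_s=\rho_{a,\hat c_s}^\F$ and $\mu=\nu^a_{\hat c;\F}$: keeping $\hat c_s$ in a compact set to the right of $\hat c-1$, the factor $\prod_{f\in F_2}(x+\hat c_s+f)$ cancels the only relevant pole of $\Gamma(x+\hat c_s)$, so that $|x^n\rho_{a,\hat c_s}^\F(x)|$ is bounded, uniformly in $s$, by a fixed polynomial times $|a|^x\Gamma(x+\hat c-\tfrac12)/x!\in L^1(\NN)$. Hence the monic orthogonal polynomials of $\rho_{a,\hat c_s}^\F$ converge to those of $\nu^a_{\hat c;\F}$, their norms converge, and, in the positive case, completeness is inherited.

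Next I would identify (\ref{qusmei}). By \cite{DdI} (the dual, size-$(m+1)$ determinantal representation of the orthogonal polynomials of $\rho_{a,d}^\F$, built from $G_i=I(F_i)$), for $d\notin\{0,-1,\dots\}$ and under $\Omega^{a,d}_\F(n)\ne0$ these polynomials equal, up to a normalising constant, the determinant (\ref{qusmei}) with $c$ replaced by $c(d)=d+\max F_1+\max F_2+2$; both sides are continuous in $d$, and the packing identity $m_n^{a,c}(x)=\frac{\prod_{j=0}^{-c}(x-j)}{\prod_{j=0}^{-c}(n-j)}\,m_{n+c-1}^{a,2-c}(x+c-1)$ and its $1/a$-analogue, together with $\max G_i=\max F_i$ and $m_i=\max F_i-k_i+1$, legitimise the specialisation at $d=\hat c$ for the entries that there become packed Meixner polynomials. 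The leading coefficient in $x$ of this determinant is, up to a factor that does not vanish under (\ref{hf1}), proportional to $\Omega^{a,d}_\F(n+1)$, and the associated norm is
\[
\langle q_n^{a,c(d);\F},q_n^{a,c(d);\F}\rangle_{\rho_{a,d}^\F}=C_\F\,\frac{a^n\Gamma(n+k+d)}{(1-a)^{2n+d}\,n!}\,\Omega^{a,d}_\F(n)\,\Omega^{a,d}_\F(n+1),
\]
with $C_\F>0$ depending only on $a$ and $\F$; all of these depend continuously on $d$ at $d=\hat c$, where $\Gamma(n+k+\hat c)$ is finite and positive since $k+\hat c\ge1$ by (\ref{hf2}). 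Letting $\hat c_s\to\hat c$ and using (\ref{hf1}), the determinant (\ref{qusmei}) appears as the limit of $q_n^{a,c(\hat c_s);\F}$, has exact degree $n$, and therefore is a nonzero scalar multiple of the monic limit from the first paragraph, hence orthogonal with respect to $\nu^a_{\hat c;\F}$; passing to the limit in the displayed identity gives the norm (\ref{nm2}).

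For the higher order difference operator I would use the $\D$-operator method of \cite{du1,DdI} directly: the algebraic identities relating the two Meixner $\D$-operators attached to the weight $a^x\Gamma(x+d)/x!$ are polynomial in $d$ and therefore remain valid at $d=\hat c$ even though the weight itself degenerates, and the standard construction then produces an operator of the form (\ref{hodo}) with $-s=r=1+\sum_{i=1}^{2}\bigl(\sum_{f\in F_i}f-\binom{k_i}{2}\bigr)$ $(=1+\deg\Omega^{a,c}_\F)$, having $(q_n^{a,c;\F})_n$ as eigenfunctions; its eigenvalues are the specialisations at $d=\hat c$ of the (explicitly polynomial in $d$ and $n$) eigenvalues of the Christoffel case. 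Alternatively, one checks that, after a fixed normalisation of the polynomial coefficients $h_l$, the operators of \cite{DdI} for the measures $\rho_{a,\hat c_s}^\F$ have constant order $2r$ and converge coefficientwise as $\hat c_s\to\hat c$; the limiting operator then has $(q_n^{a,c;\F})_n$ as eigenfunctions by the first two paragraphs.

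The step I expect to be the main obstacle is the determinant bookkeeping of the second paragraph: transporting the \cite{DdI} representation to the precise form (\ref{qusmei}), pinning down the parameter shift $c=\hat c+\max F_1+\max F_2+2$ through the involution $I$ and the packing identity, and verifying that (\ref{hf1}) is exactly the non-degeneracy condition that keeps both the approximating polynomials and the limiting determinant of degree $n$. A secondary delicate point is the uniform $L^1$-domination near the pole of the Gamma function, which is why $\hat c_s$ must be taken to the right of $\hat c-1$ in Lemma \ref{1le}.
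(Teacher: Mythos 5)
Your proposal is correct and follows essentially the same route as the paper: the eigenfunction property is read off algebraically from the $\D$-operator construction of \cite{DdI}, while orthogonality and the norm formula are obtained by perturbing the parameter to $\hat c_s\notin\{0,-1,-2,\dots\}$ (a Hurwitz-type argument keeping $\Omega^{a,\hat c_s}_\F(n)\neq0$), invoking Theorem 6.2 of \cite{DdI} for the determinantal representation and norm, and passing to the limit via Lemmas \ref{1le} and \ref{lemi}. The only ingredient you leave implicit is the invariance identity $\Omega^{a,d}_\F(x)=(-1)^wa^u(1-a)^v\Omega^{a,-d-\max F_1-\max F_2}_{\G}(-x)$ from \cite{ducu}, which the paper uses to translate the non-degeneracy condition and the norm from $\Omega^{a,2-c}_\G(-n)$ into the stated form involving $\Omega^{a,\hat c}_\F(n)$.
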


\begin{proof}
We need the invariance for quasi Casorati-Meixner determinants (see Theorem 5.1 of \cite{ducu}), which we write in the form
\begin{equation}\label{duc}
\Omega^{a,d}_\F(x)=(-1)^wa^u(1-a)^v\Omega^{a,-d-\max F_1-\max F_2}_\G(-x),
\end{equation}
where $\G=I(\F)$, and  $w,u,v$ are certain integers which only depend on the pair $\F$.

The identity (\ref{duc}) for $d=\hat c$ shows that the assumption (\ref{hf1}) is equivalent to
\begin{equation}\label{hf12}
\Omega ^{a,2-c}_\G(-n)\not =0,\quad n=0,1,2,\cdots
\end{equation}
(because $-\hat c-\max F_1-\max F_2=2-c$). Hence, we conclude that $q_n^{a,c;\F}$ is a polynomial of degree $n$.

Notice also that since $\F$ satisfies (\ref{hf2}), $c$ has to be a positive integer.

That the polynomials $(q_n^{a,c;\F})_n$ are eigenfunctions of a difference operator of the form (\ref{hodo}) is a direct consequence of Theorem 3.2 of \cite{DdI} (after a suitable re-normalization of the polynomials) and the formulas for the $\D$-operators for the Meixner polynomials displayed in the Section 6 of \cite{DdI} (see also Section 5 of \cite{du1}).

Since $\Omega ^{a,2-c}_\F$ is a polynomial, using (\ref{hf12}) and the Hurwitz Theorem,
we can find a sequence of real numbers $c_s\not\in \ZZ$ with $\lim_{s\to \infty}c_s=c$ and such that for $s\in \NN$
\begin{equation}\label{hf1n}
\Omega^{a,2-c_s}_\F (-n)\not =0,\quad n=0,1,2,\cdots
\end{equation}
Write $\hat c_s=-c_s+\max F_1+\max F_2+2\not \in \ZZ$.

According to Theorem 6.2 of \cite{DdI}, for $s\in \NN$, the polynomials
\begin{equation}\label{qusmen}
q_n^{a,c_s;\F}(x)=
\left|
  \begin{array}{@{}c@{}lccc@{}c@{}}
  & m_{n-j+1}^{a,c_s}(x-\max F_1-1)/(a-1)^{j-1} &&\hspace{-.6cm}{}_{1\le j\le m+1} \\
    \dosfilas{m_{g}^{a,2- c_s}(-n+j-2) }{g\in G_1} \\
    \dosfilas{m_{g}^{1/a,2- c_s}(-n+j-2)/a^{j-1}}{g\in G_2}
  \end{array}\right|
\end{equation}
are orthogonal with respect to the measure
$$
\rho_{a,\hat c_s}^\F=\sum _{x=0}^\infty \prod _{f\in F_1}(x-f)\prod _{f\in F_2}(x+\hat c_s+f)\frac{a^x\Gamma(x+\hat c_s)}{x!}\delta_x.
$$
Moreover
$$
\langle q_n^{a,c_s;\F},q_n^{a,c_s;\F}\rangle _{\rho_{a,\hat c_s}^\F}=\frac{a^{n+k_1-2m_2}\Gamma(n+k+\hat c_s)}{(1-a)^{2n+k+\hat c_s} n!}\Omega ^{a,2-c_s}_\G(-n)\Omega ^{a,2-c_s}_\G(-n-1)
$$
(this identity is implicit in the identities (5.8), (5.9) and (5.10), and Lemma 4.2 of
\cite{DdI} and it can be proved as the identities (3.17) and (3.19) of \cite{ducu}).

Using (\ref{duc}), this can be written
\begin{equation}\label{nmqt}
\langle q_n^{a,c_s;\F},q_n^{a,c_s;\F}\rangle_{\rho_{a,\hat c_s}^\F}=C_\F\frac{a^{n}\Gamma(n+k+\hat c_s)}{(1-a)^{2n+\hat c_s} n!}\Omega ^{a,\hat c_s}_\F(n)\Omega ^{a,\hat c_s}_\F(n+1),
\end{equation}
where $C_\F=a^{2u+k_1}(1-a)^{2v-k}>0$ and only depends on $a$ and $\F$.
Notice that from the hypothesis (\ref{hf2}), it follows that $1\le \hat c+k$.
Taking then into account that
$$
\lim_{s\to+\infty}q_n^{a,c_s;\F}(x)=q_n^{a,c;\F}(x),\quad \lim_{s\to+\infty}\Omega^{a,c_s}_\F(x)=\Omega^{a,c}_\F(x),
$$
the Theorem is an easy consequence of the Lemmas \ref{1le} and \ref{lemi}.

\end{proof}

In the next Section, see Corollary \ref{eles}, we prove that when the measure $\nu ^{a,\hat c}_\F$ is positive, then the assumption (\ref{hf1}) holds and we can construct a sequence of orthogonal polynomials with respect to $\nu ^{a,\hat c}_\F$ using (\ref{qusmei}).

\section{New exceptional Meixner orthogonal polynomials}\label{sect5}
As in \cite{dume}, for real numbers $a,\hat c$, with $a\not = 0,1$, we associate to each pair $\F=(F_1,F_2)$ of finite sets of positive integers the polynomials $m_n^{a,\hat c;\F}$, $n\in \sigma_\F$, defined as follows
\begin{equation}\label{defmex}
m_n^{a,\hat c;\F}(x)=  \left|
  \begin{array}{@{}c@{}cccc@{}c@{}}
    & m_{n-u_\F}^{a,\hat c}(x+j-1)& &\hspace{-.6cm}{}_{1\le j\le k+1} \\
    \dosfilas{ m_{f}^{a,\hat c}(x+j-1)  }{f\in F_1} \\
    \dosfilas{ m_{f}^{1/a,\hat c}(x+j-1)/a^{j-1} }{f\in F_2}
  \end{array}
  \right|
\end{equation}
where the number $u_\F$ and the infinite set of nonnegative integers $\sigma _\F$ are defined by (\ref{defuf}) and (\ref{defsf}), respectively. The determinant (\ref{defmex}) should be understood as explained in (\ref{defdosf}). Using Lemma 3.4 of \cite{DdI}, we deduce that $m_n^{a,\hat c;\F}$, $n\in \sigma _\F$, is a polynomial of degree $n$ with leading coefficient equal to
\begin{equation}\label{lcrn}
(-1)^{k_2(k_1+1)} \frac{(a-1)^{k_2(k_1+1)}V_{F_1}V_{F_2}\prod_{f\in F_1}(f-n+u_\F)}{a^{k_2k_1+\binom{k_2+1}{2}}(n-u_\F)!\prod_{f\in F_1}f!\prod_{f\in F_2}f!},
\end{equation}
where $V_F$ is the Vandermonde determinant (\ref{defvdm}).

Combining columns in (\ref{defmex}), we have the alternative definition
\begin{equation}\label{defmexa}
m_n^{a,c;\F}(x)= \left|
  \begin{array}{@{}c@{}cccc@{}c@{}}
    & m_{n-u_\F-j+1}^{a,\hat c+j-1}(x)& &\hspace{-.6cm}{}_{1\le j\le k+1} \\
    \dosfilas{ m_{f-j+1}^{a,\hat c+j-1}(x)  }{f\in F_1} \\
    \dosfilas{\frac{(1-a)^{j-1}}{a^{j-1}} m_{f}^{1/a,\hat c+j-1}(x)}{f\in F_2}
  \end{array}
  \right| .
\end{equation}

Taking a sequence of numbers $\hat c_s\not =0,-1,-2,\cdots $ with $\lim_{s\to+\infty}\hat c_s=\hat c$, we can easily prove using Theorem 3.3 of \cite{dume} that the polynomials $m_n^{a,\hat c;\F}$ are eigenfunctions of a second order difference operator with rational coefficients.

\begin{theorem}\label{th3.3} Let $\F=(F_1,F_2)$ be a pair of finite sets of positive integers. Then the polynomials $m_n^{a,\hat c;\F}$ (\ref{defmex}), $n\in \sigma _\F$,
are common eigenfunctions of the second order difference operator
\begin{equation}\label{sodomex}
D_\F=h_{-1}(x)\Sh_{-1}+h_0(x)\Sh_0+h_1(x)\Sh_{1},
\end{equation}
where
\begin{align}\label{jpm1}
h_{-1}(x)&=\frac{x\Omega^{a,\hat c}_\F(x+1)}{(a-1)\Omega^{a,\hat c}_\F(x)},\\\label{jpm2}
h_0(x)&=-\frac{(1+a)(x+k)+a\hat c}{a-1}+u_\F+\Delta\left(\frac{a(x+\hat c+k-1)\Lambda^{a,\hat c}_\F(x)}{(a-1)\Omega^{a,\hat c}_\F(x)}\right),\\\label{jpm3}
h_1(x)&=\frac{a(x+\hat c+k)\Omega^{a,\hat c}_\F(x)}{(a-1)\Omega^{a,\hat c}_\F(x+1)},
\end{align}
$\Delta $ denotes the first order difference operator $\Delta f=f(x+1)-f(x)$
and
$$
\Lambda_\F^{a,\hat c}(x)=\left|
  \begin{array}{@{}c@{}ccccc@{}c@{}}
    \dosfilas{ m_{f}^{a,\hat c}(x) & m_{f}^{a,\hat c}(x+1) &\cdots  & m_{f}^{a,\hat c}(x+k-2)& m_{f}^{a,\hat c}(x+k) }{f\in F_1} \\
    \dosfilas{ m_{f}^{1/a,\hat c}(x) & \displaystyle \frac{m_{f}^{1/a,\hat c}(x+1)}{a} & \cdots & \displaystyle \frac{m_{f}^{1/a,\hat c}(x+k-2)}{a^{k-2}} & \displaystyle \frac{m_{f}^{1/a,\hat c}(x+k)}{a^{k}} }{f\in F_2}
  \end{array}
  \right|.
$$
Moreover $D_\F(m_n^{a,\hat c;\F})=nm_n^{a,\hat c;\F}$, $n\in \sigma_\F$.
\end{theorem}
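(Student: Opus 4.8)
The plan is to obtain Theorem~\ref{th3.3} by a limiting argument from the known result for generic (non-integer) values of the parameter, exactly as the statement just before the theorem suggests. Concretely, I would pick a sequence $\hat c_s\notin\{0,-1,-2,\ldots\}$ with $\lim_{s\to\infty}\hat c_s=\hat c$, and invoke Theorem~3.3 of \cite{dume}: for each $s$ the exceptional Meixner polynomials $m_n^{a,\hat c_s;\F}$, $n\in\sigma_\F$, are eigenfunctions of a second order difference operator $D_\F^{s}=h_{-1}^s(x)\Sh_{-1}+h_0^s(x)\Sh_0+h_1^s(x)\Sh_1$ whose coefficients are the rational functions obtained from \eqref{jpm1}--\eqref{jpm3} by replacing $\hat c$ with $\hat c_s$, and moreover $D_\F^s(m_n^{a,\hat c_s;\F})=n\,m_n^{a,\hat c_s;\F}$ for $n\in\sigma_\F$. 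The point is that both the polynomials and the coefficient functions are, for fixed $x$ (or fixed $n$), rational — indeed polynomial in the entries $m_f^{a,\hat c_s}(\cdot)$ and $m_f^{1/a,\hat c_s}(\cdot)$, which depend continuously (in fact polynomially, via \eqref{Mxpol}) on $\hat c_s$. Hence letting $s\to\infty$ yields $m_n^{a,\hat c_s;\F}\to m_n^{a,\hat c;\F}$ and $D_\F^s\to D_\F$ coefficientwise, so that the eigenvalue equation $D_\F(m_n^{a,\hat c;\F})=n\,m_n^{a,\hat c;\F}$ passes to the limit. This is essentially the argument already carried out for Theorem~\ref{1th} via Lemmas~\ref{1le} and \ref{lemi}, and I would reuse that template.

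The first genuine step is to check that the coefficient functions $h_{-1},h_0,h_1$ are well defined at $\hat c$, i.e. that the denominators $\Omega_\F^{a,\hat c}(x)$ and $\Omega_\F^{a,\hat c}(x+1)$ appearing in \eqref{jpm1}--\eqref{jpm3} are not identically zero as functions of $x$ (they are polynomials in $x$ of the degree recorded after \eqref{defomegm}), so $D_\F$ is a bona fide difference operator with rational coefficients; poles at finitely many integers are harmless since the relevant evaluations happen at points where numerators compensate, just as in \cite{dume}. The second step is the continuity/convergence claim: here one writes $m_n^{a,\hat c;\F}$ and the three coefficients explicitly via the determinantal formulas \eqref{defmex} (or \eqref{defmexa}) and via $\Omega_\F$, $\Lambda_\F$, observes that every entry is a polynomial in $\hat c_s$ (by \eqref{Mxpol}), hence each determinant is a polynomial in $\hat c_s$, hence continuous, and therefore $\lim_{s}h_\ell^s(x)=h_\ell(x)$ pointwise in $x$ wherever $\Omega_\F^{a,\hat c}(x)\Omega_\F^{a,\hat c}(x+1)\ne 0$, and similarly $\lim_s m_n^{a,\hat c_s;\F}(x)=m_n^{a,\hat c;\F}(x)$. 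The third step is to pass to the limit in the identity $\sum_{\ell=-1}^{1}h_\ell^s(x)\,m_n^{a,\hat c_s;\F}(x+\ell)=n\,m_n^{a,\hat c_s;\F}(x)$; since both sides are rational in $\hat c_s$ and agree for the sequence $\hat c_s\to\hat c$, they agree at $\hat c$ (clearing the common denominator $\Omega_\F^{a,\hat c_s}(x)\Omega_\F^{a,\hat c_s}(x+1)$ turns this into an identity of polynomials in $\hat c_s$, which then holds at $\hat c$ by continuity), giving $D_\F(m_n^{a,\hat c;\F})=n\,m_n^{a,\hat c;\F}$.

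The main obstacle I anticipate is not the limiting procedure itself but the bookkeeping needed to guarantee that the limiting operator is exactly the one written in \eqref{jpm1}--\eqref{jpm3}, with the stated $\Lambda_\F^{a,\hat c}$ and the $\Delta$-term in $h_0$: one must make sure that the rational expression produced by Theorem~3.3 of \cite{dume} at parameter $\hat c_s$ really is the $\hat c_s$-version of \eqref{jpm1}--\eqref{jpm3} (matching normalizations of $m_n^{a,c}$, the shift by $\max F_1$, the constant $u_\F$, etc.), and that no spurious pole/zero cancellation changes the form of the coefficients in the limit. A clean way to dispatch this is to note that for each fixed $x$ the function $\hat c\mapsto (a-1)\Omega_\F^{a,\hat c}(x)\,h_{-1}(x)$, and the analogous products for $h_0,h_1$, are polynomials in $\hat c$ which coincide with the corresponding polynomials in $\hat c_s$ for all $s$; by density they coincide at $\hat c$, so the formulas \eqref{jpm1}--\eqref{jpm3} are valid verbatim at $\hat c$ wherever the denominators do not vanish. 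Finally, I would remark that the eigenvalue is $n$ (independent of $s$ and of $\hat c$), so no limiting subtlety arises there, and that completeness/orthogonality are not asserted in this theorem, so no further input (such as positivity of a measure) is required.
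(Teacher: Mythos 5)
Your proposal is correct and follows essentially the same route as the paper: the author proves Theorem~\ref{th3.3} precisely by taking a sequence $\hat c_s\notin\{0,-1,-2,\cdots\}$ with $\hat c_s\to\hat c$ and passing to the limit in Theorem~3.3 of \cite{dume}, exactly as you describe. Your additional care about the rationality in $\hat c$ of the coefficients and the non-vanishing of $\Omega_\F^{a,\hat c}$ only makes explicit what the paper leaves as "we can easily prove".
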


The polynomials $(m_n^{a,\hat c;\F}(x))_{n\in\sigma _\F}$ are called exceptional Meixner polynomials when, in addition, they are orthogonal and complete with respect to a positive measure. This leads to the concept of admissibility.

When $d\not =0,-1,-2,\cdots $, we say in \cite{dume} that $d$ and $\F$ are admissible if for all $x\in \NN $
\begin{equation}\label{defadm}
\prod_{f\in F_1}(x-f)\prod_{f\in F_2}(x+d+f)\Gamma(x+d)\ge 0
\end{equation}
(see the Definition 2.5 of \cite{dume}). In other words, for $0<a<1$, the measure
\begin{equation}\label{mraf}
\rho _{a,d}^{\F}=\sum _{x=0}^\infty \prod_{f\in F_1}(x-f)\prod_{f\in F_2}(x+d+f)\frac{a^{x}\Gamma(x+d)}{x!}\delta _x
\end{equation}
is positive.

Since the measure $\rho_{a,d}^\F$ is a Christoffel transform of the Meixner measure, using \cite[Theorem 2.5]{Sz} we can construct orthogonal polynomials with respect to this measure by means of the determinant
\begin{equation}\label{defqnme}
q_n^\F(x)=\frac{\left|
  \begin{array}{@{}c@{}cccc@{}c@{}}
   &m_{n+j-1}^{a,d}(x)& &\hspace{-.6cm}{}_{1\le j\le k+1}\\
    \dosfilas{m_{n+j-1}^{a,d}(f) }{f\in F_1} \\
    \dosfilas{m_{n+j-1}^{1/a,d}(f) }{f\in F_2}
  \end{array}
  \right|}{\prod_{f\in F_1}(x-f)\prod_{f\in F_2}(x+d+f)}.
\end{equation}
In \cite{dume} and under the assumption $d\not =0,-1,-2,\cdots $ and $\rho _{a,d}^{\F}$ being a positive measure, we constructed exceptional Meixner polynomials by using as the key tool the duality between the polynomials $(m_n^{a,d;\F}(x))_{n\in\sigma _\F}$ and $(q_n^\F(x))_n$.
To do that, in Lemma 4.2 of \cite{dume}, we prove the following equivalent condition to the admissibility of $d$ and $\F$.

\begin{lemma}[Lemma 4.2 of \cite{dume}]\label{l3.1} Given  real numbers $a,d$, with $0<a<1$ and $d\not =0,-1,-2,\ldots$, and a pair $\F $ of finite sets of positive integers, the following conditions are equivalent.
\begin{enumerate}
\item The measure $\rho_{a,d}^\F$ (\ref{mqs}) is positive.
\item $d$ and $\F$ are admissible.
\item $\Gamma(n+d+k)\Omega_\F ^{a,d}(n)\Omega_\F ^{a,d}(n+1)>0$ for all nonnegative integer $n$, where the polynomial $\Omega_\F^{a,d}$ is defined by (\ref{defomegm}).
\end{enumerate}
\end{lemma}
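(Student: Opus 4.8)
The plan is to prove $(1)\Leftrightarrow(2)$ directly and to connect both with $(3)$ through the orthogonal polynomials $q_n^{\F}$ of (\ref{defqnme}) and their explicit norm. The equivalence of (1) and (2) is immediate: for $0<a<1$ the factor $a^{x}/x!$ in the mass $\rho_{a,d}^{\F}(x)$ is strictly positive, so $\rho_{a,d}^{\F}(x)\ge 0$ for every $x\in\NN$ is exactly the admissibility inequality (\ref{defadm}). Hence the content of the lemma lies entirely in tying these to (3).

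The bridge is the identity
$$
\langle q_n^{\F},q_n^{\F}\rangle_{\rho_{a,d}^{\F}}=C_{\F}\,\frac{a^{n}\Gamma(n+d+k)}{(1-a)^{2n+d}\,n!}\,\Omega_{\F}^{a,d}(n)\,\Omega_{\F}^{a,d}(n+1),\qquad C_{\F}>0,
$$
which holds as a formal identity for every $d\notin\{0,-1,-2,\dots\}$; it is the same formula used (for $d=\hat c_s$) in the proof of Theorem \ref{1th}, and it is contained in \cite{DdI}, \cite{ducu}, \cite{Sz}, with $k=k_1+k_2$ and $C_\F$ depending only on $a$ and $\F$. Since the prefactor $a^{n}/((1-a)^{2n+d}n!)$ is positive, the sign of $\langle q_n^{\F},q_n^{\F}\rangle$ equals that of $\Gamma(n+d+k)\Omega_{\F}^{a,d}(n)\Omega_{\F}^{a,d}(n+1)$, so condition (3) is precisely the statement that $\langle q_n^{\F},q_n^{\F}\rangle>0$ for all $n\in\NN$.

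For $(3)\Rightarrow(2)$: if $\langle q_n^{\F},q_n^{\F}\rangle>0$ for all $n$ then $q_n^{\F}\neq 0$; being orthogonal to every polynomial of smaller degree for the moment functional $p\mapsto\int p\,d\rho_{a,d}^{\F}$, it must have degree exactly $n$, so its monic renormalizations form a full sequence of monic orthogonal polynomials satisfying a three-term recurrence $\hat q_{n+1}=(x-\beta_n)\hat q_n-\gamma_n\hat q_{n-1}$ with $\gamma_n=\langle\hat q_n,\hat q_n\rangle/\langle\hat q_{n-1},\hat q_{n-1}\rangle>0$. A Favard-type argument then gives positivity of $\rho_{a,d}^{\F}$. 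The subtle point is that Favard only produces \emph{some} positive representing measure, so one also needs $\rho_{a,d}^{\F}$ to be determined by its moments; this holds because its masses are supported on $\NN$ and decay geometrically, so that $\int e^{\varepsilon x}\,d|\rho_{a,d}^{\F}|<\infty$ for small $\varepsilon>0$. Alternatively one argues directly: for $x_0\in\NN\setminus F_1$ the mass $\rho_{a,d}^{\F}(x_0)$ is a limit of the nonnegative numbers $\int p_N^{2}\,d\rho_{a,d}^{\F}$, where $p_N$ is the Lagrange interpolation polynomial vanishing at the support points nearest to $x_0$ and normalized by $p_N(x_0)=1$, the geometric decay of the masses controlling the tail.

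For $(2)\Rightarrow(3)$, which I expect to be the real obstacle: if $\rho_{a,d}^{\F}$ is positive then, since the hypothesis $d\notin\{0,-1,-2,\dots\}$ forces its support to be exactly the infinite set $\NN\setminus F_1$, all of its Hankel determinants are positive; writing $q_n^{\F}=\kappa_n\hat q_n$ with $\hat q_n$ monic, one gets $\langle q_n^{\F},q_n^{\F}\rangle=\kappa_n^{2}$ times a ratio of Hankel determinants, hence $\ge 0$, hence $\Gamma(n+d+k)\Omega_{\F}^{a,d}(n)\Omega_{\F}^{a,d}(n+1)\ge 0$. Upgrading this to a strict inequality requires $\kappa_n\neq 0$, i.e.\ $\deg q_n^{\F}=n$, i.e.\ $\Omega_{\F}^{a,d}(n)\neq 0$; and the key fact to establish is that \emph{admissibility of $d$ and $\F$ forces the Casorati-type determinant $\Omega_{\F}^{a,d}$ to have no zero at a nonnegative integer}. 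I would derive this from the Chebyshev-system and interlacing properties of the Meixner polynomials: removing the points of $F_1$ from a positive Meixner-type measure, together with the deformation encoded by $F_2$, yields again a positive measure only when the relevant discrete Wronskian/Casoratian stays sign-definite as $n$ varies, and sign-definiteness in particular excludes zeros. This nonvanishing statement is the heart of the lemma; the remaining steps are determinant manipulations already available in \cite{DdI}, \cite{ducu}, \cite{Sz}.
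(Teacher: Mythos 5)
First, a point of comparison: the paper does not prove this lemma at all --- it is imported verbatim as Lemma 4.2 of \cite{dume} --- so there is no in-paper argument to measure you against. Judged on its own, your proposal gets the easy parts right. The equivalence $(1)\Leftrightarrow(2)$ is indeed immediate from the positivity of $a^x/x!$. Your $(3)\Rightarrow(1)$ route (norm identity $\Rightarrow$ positive recurrence coefficients $\Rightarrow$ Favard $\Rightarrow$ identification of the Favard measure with $\rho_{a,d}^\F$ via determinacy/exponential integrability) is sound, and you correctly flag the determinacy issue that a careless Favard argument would miss; note also that $(3)$ forces $\Omega_\F^{a,d}(n)\neq0$ for every $n$, which is what guarantees $\deg q_n^\F=n$ and hence that the monic sequence exists at all.

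The genuine gap is in $(2)\Rightarrow(3)$, and you name it yourself: you must show that admissibility forces $\Omega_\F^{a,d}(n)\neq 0$ for all $n\in\NN$, and you offer only ``I would derive this from the Chebyshev-system and interlacing properties of the Meixner polynomials.'' That sketch does not go through as stated. The standard Szeg\H{o}-type argument (\cite[Theorem 2.5]{Sz}), which is what delivers nonvanishing of Christoffel--Casorati determinants ``for free,'' requires the \emph{base} measure to be positive with infinite support; here $\rho_{a,d}^\F$ is a Christoffel transform of the Meixner measure $\rho_{a,d}$, which for $d<0$ (the interesting regime, where $\Gamma(x+d)$ changes sign along $\NN$) is \emph{not} positive --- admissibility only asserts that the polynomial factor compensates the sign changes of $\Gamma(x+d)$. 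So there is no positive reference measure whose orthogonal polynomials form the Chebyshev system you want to invoke, and interlacing of Meixner zeros says nothing about the mixed determinant $\Omega_\F^{a,d}$, which involves both $m_f^{a,d}$ and $m_f^{1/a,d}$ entries. This nonvanishing statement is precisely the substantive content of Lemma 4.2 of \cite{dume}, where it is obtained from the combinatorial structure theory of admissible pairs (the analogue of Lemma 2.6 there, reducing an admissible $(d,\F)$ step by step to a configuration where the base measure is genuinely positive and the Szeg\H{o} argument applies). Without that reduction, or some substitute for it, your proof of $(2)\Rightarrow(3)$ only yields $\Gamma(n+d+k)\Omega_\F^{a,d}(n)\Omega_\F^{a,d}(n+1)\ge 0$, which is strictly weaker than the lemma and, in particular, insufficient for the way the lemma is used later in the paper (e.g.\ in the proof of Lemma \ref{l3.1p}).
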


But when $d=\hat c\in \{-1,-2,\cdots \}$ the measure $\rho_{a,\hat c}^\F$ is not well defined. Under the assumption (\ref{hf2}), this measure might
be substituted by the measure $\nu^{a}_{\hat c;\F}$ (\ref{defnu}) (which, according to the Lemma \ref{1le}, it is the limit of the measures $\rho_{a,\hat c_s}^\F$ when $\hat c_s\to \hat c$). But when $\hat c\in \{-1,-2,\cdots \}$, the orthogonal polynomials with respect to $\nu^{a}_{\hat c;\F}$ can not be represented as in (\ref{defqnme}) because (as explained in the Remark \ref{rm1}) $\nu_{a,\hat c}^\F$ is not anymore the Christoffel transform of a Meixner measure. Hence, to construct the new families of exceptional Meixner polynomials we will avoid the duality and proceed in a different way (with respect to the approach we used in \cite{dume}).

In the rest of this Section we will assume $\hat c\in \{-1,-2,\cdots \}$, $\F=(F_1,F_2)$ where $F_i$ is a finite set of positive integers  and
\begin{equation}\label{hf2s}
\{0,1,2,\cdots , -\hat c\}\subset F_1\cup (-\hat c-F_2).
\end{equation}
As in the previous Section (see Remarks \ref{remp}, \ref{rems}), we do not need to consider neither the case $\hat c=0$ nor $0\in F_1\cup F_2$. Indeed, it is not difficult to check that in those cases and up to a multiplicative constant the polynomial
$m_n^{a,\hat c;\F}$ is equal to $m_n^{a,d;\U}$, where $d=\hat c+s_1+s_2$ and $\U=(U_1,U_2)$ and
\begin{equation}\label{cmlp}
s_i=\begin{cases} 0, &0\not \in F_i,\\ s_{F_i},& 0\in F_i,\end{cases},\quad U_i=\begin{cases} F_i, &0\not \in F_i,\\ (F_i)_{\Downarrow},& 0\in F_i\end{cases}
\end{equation}
(see (\ref{defs0}) and (\ref{defffd})).

When $d=\hat c \in \{-1,-2,\cdots \}$, we define the admissibility condition for $\hat c$ and $\F$ as the positivity of the measure $\nu^{a}_{\hat c;\F}$ (\ref{defnu}).

\begin{definition}\label{admp}
For $\hat c\in \{-1,-2,\cdots \}$ and pair $\F$ of set of positive integers satisfying (\ref{hf2s}), we say that $\hat c$ and $\F$ are admissible if for all $x\in \NN \setminus F_1$
\begin{equation}\label{defadm2}
\prod_{h\in \Hh}(x-h)\ge 0,
\end{equation}
where the finite set $\Hh$ is defined by (\ref{defH})
\end{definition}

Using Lemma \ref{1le}, we can see that the admissibility condition (\ref{defadm2}) follows from (\ref{defadm}) by changing $\hat c$ to $\hat c_s$ and then taking limit when $\hat c_s$ goes to $\hat c$.

We next prove the following analogous to the Lemma \ref{l3.1}.

\begin{lemma}\label{l3.1p} Let $a,\hat c$ be real numbers with $0<a<1$ and $\hat c\in \{-1,-2,\ldots\}$, and let $\F $ be a pair of finite sets of positive integers satisfying (\ref{hf2s}). Then the following conditions are equivalent.
\begin{enumerate}
\item The measure $\nu^{a}_{\hat c;\F}$ (\ref{defnu}) is positive.
\item $\hat c$ and $\F$ are admissible (according to the Definition \ref{admp}).
\item $\Omega_\F ^{a,\hat c}(n)\Omega_\F ^{a,\hat c}(n+1)>0$ for all nonnegative integer $n$, where the polynomial $\Omega_\F^{a,c}$ is defined by (\ref{defomegm}).
\end{enumerate}
\end{lemma}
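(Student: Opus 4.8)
The plan is to prove $(1)\Leftrightarrow(2)$ directly and $(1)\Leftrightarrow(3)$ by a limiting argument, using that by Lemma \ref{1le} the measure $\nu^{a}_{\hat c;\F}$ is a pointwise limit of the genuine Christoffel transforms $\rho^\F_{a,\hat c_s}$ (with $\hat c_s\notin\{0,-1,\dots\}$, $\hat c_s\to\hat c$), for which the corresponding statement is Lemma \ref{l3.1}. The equivalence $(1)\Leftrightarrow(2)$ is immediate: since $0<a$, the mass of $\nu^{a}_{\hat c;\F}$ at $x\in\NN\setminus F_1$ is $a^x\prod_{h\in\Hh}(x-h)$, so positivity of the measure is exactly the inequality in Definition \ref{admp}. (Recall from the proof of Lemma \ref{1le} that every nonnegative integer lying in $\Hh$ already lies in $F_1$; hence these masses are all nonzero and $\supp\nu^{a}_{\hat c;\F}=\NN\setminus F_1$, which is infinite — this is used below.)

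For $(3)\Rightarrow(1)$: if $\Omega^{a,\hat c}_\F(n)\Omega^{a,\hat c}_\F(n+1)>0$ for all $n\ge0$, then the numbers $\Omega^{a,\hat c}_\F(n)$ are all nonzero and share one common sign. Since $d\mapsto\Omega^{a,d}_\F$ depends polynomially on $d$, with degree in the variable $x$ independent of $d$, an elementary compactness/continuity-of-roots argument shows that $\Omega^{a,d}_\F(n)\ne0$ with that same sign, for all $n\ge0$ simultaneously, once $d$ is close enough to $\hat c$; hence $\Omega^{a,d}_\F(n)\Omega^{a,d}_\F(n+1)>0$. Also $1\le\hat c+k$ by (\ref{hf2s}), so $\Gamma(n+d+k)>0$ for $d$ near $\hat c$. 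Then the implication $(3)\Rightarrow(1)$ of Lemma \ref{l3.1} makes $\rho^\F_{a,d}$ positive for such $d\notin\{0,-1,\dots\}$, and Lemma \ref{1le}, applied along $d=\hat c_s\to\hat c$, yields that $\nu^{a}_{\hat c;\F}$ is positive.

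For $(1)\Rightarrow(3)$: choosing by Lemma \ref{1le} numbers $\hat c_s\notin\{0,-1,\dots\}$ with $\hat c_s\to\hat c$ and $\rho^\F_{a,\hat c_s}$ positive, Lemma \ref{l3.1} gives $\Gamma(n+\hat c_s+k)\Omega^{a,\hat c_s}_\F(n)\Omega^{a,\hat c_s}_\F(n+1)>0$ for all $n$; letting $s\to\infty$ (again $\Gamma(n+\hat c+k)>0$ by (\ref{hf2s})) already gives the weak inequality $\Omega^{a,\hat c}_\F(n)\Omega^{a,\hat c}_\F(n+1)\ge0$. The real work is to upgrade this to a strict inequality, equivalently to show $\Omega^{a,\hat c}_\F(n)\ne0$ for every $n$ — precisely the hypothesis (\ref{hf1}) recorded in Corollary \ref{eles}. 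For this I would repeat the limiting argument in the proof of Theorem \ref{1th} \emph{without} invoking (\ref{hf1}): the norm identity (\ref{nm2}) survives the limit and reads $\langle q_n^{a,c;\F},q_n^{a,c;\F}\rangle_{\nu^{a}_{\hat c;\F}}=B_n\,\Omega^{a,\hat c}_\F(n)\Omega^{a,\hat c}_\F(n+1)$ with $B_n>0$; the $\rho^\F_{a,\hat c_s}$-orthogonality of $q_n^{a,c_s;\F}$ to $1,x,\dots,x^{n-1}$ passes to the limit, so $q_n^{a,c;\F}\perp_{\nu^{a}_{\hat c;\F}}$ every polynomial of degree $<n$; and a Laplace expansion of (\ref{qusmei}) along its first row, together with (\ref{duc}), shows that the coefficient of $x^n$ in $q_n^{a,c;\F}$ — which is the top coefficient, since that first row has degree $\le n$ — is a nonzero constant times $\Omega^{a,\hat c}_\F(n)$.

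Granting these facts, argue by induction on $n$: if $\Omega^{a,\hat c}_\F(n)\ne0$, then $q_n^{a,c;\F}$ has exact degree $n$ and, being orthogonal to all lower-degree polynomials with respect to the positive measure $\nu^{a}_{\hat c;\F}$ of infinite support, it is a nonzero scalar multiple of the $n$-th monic orthogonal polynomial, so $\langle q_n^{a,c;\F},q_n^{a,c;\F}\rangle_{\nu^{a}_{\hat c;\F}}>0$; comparing with the norm identity and using $B_n>0$ forces $\Omega^{a,\hat c}_\F(n)\Omega^{a,\hat c}_\F(n+1)>0$, in particular $\Omega^{a,\hat c}_\F(n+1)\ne0$. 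The induction closes once the base case $\Omega^{a,\hat c}_\F(0)\ne0$ is established, and this is the step I expect to be the genuine obstacle: the soft limiting argument only produces a weak inequality there, and the nonzero numbers $\Omega^{a,\hat c_s}_\F(0)$ could a priori degenerate to $0$. I would settle it by a direct evaluation — combining columns in the $k\times k$ determinant defining $\Omega^{a,\hat c}_\F(0)$ (or, dually, evaluating $\Omega^{a,2-c}_\G(0)$ via (\ref{duc})) and using $m_f^{a,c}(0)=\frac{a^f}{(1-a)^f}\binom{-c}{f}$ — to obtain a closed form for $\Omega^{a,\hat c}_\F(0)$ which, because $\hat c$ is a negative integer and $0<a<1$, is a combination of strictly-signed terms and hence nonzero. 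Once that is in hand the induction yields (3), and with it the non-vanishing (\ref{hf1}).
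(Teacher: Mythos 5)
Your treatment of $(1)\Leftrightarrow(2)$ and of $(3)\Rightarrow(1)$ is essentially correct. For $(3)\Rightarrow(1)$ you perturb the common sign of the integer values $\Omega_\F^{a,\hat c}(n)$ to nearby non-integer parameters $d$ and invoke Lemma \ref{l3.1}; the paper instead applies Hurwitz's theorem to complex neighbourhoods of each interval $(n,n+1)$ to preserve the parity of the number of real zeros there. Both routes work (yours needs the observation that the leading coefficient of $\Omega_\F^{a,d}$ is nonzero and continuous in $d$, so that all but finitely many $n$ are handled uniformly), and the difference is cosmetic.

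The genuine gap is exactly where you suspect it: the base case $\Omega_\F^{a,\hat c}(0)\neq 0$ in your upward induction for $(1)\Rightarrow(3)$. The proposed fix --- a closed-form evaluation of the $k\times k$ Casorati determinant $\Omega_\F^{a,\hat c}(0)$ as a ``combination of strictly-signed terms'' --- is not substantiated and is implausible as stated: such an unconditional evaluation would prove $\Omega_\F^{a,\hat c}(0)\neq 0$ \emph{without} using the positivity hypothesis (1), whereas the non-vanishing of $\Omega_\F^{a,\hat c}$ at the nonnegative integers is precisely the content of the admissibility condition and cannot be expected to hold for arbitrary $\F$. The paper's proof avoids any base case by normalizing differently: it works with the \emph{monic} orthogonal polynomials $Q_n^{a,c_s;\F}=q_n^{a,c_s;\F}/\Omega^{a,2-c_s}_{\G}(-n)$, whose squared norm (\ref{phx5}) is a positive constant times the \emph{ratio} $\Omega^{a,\hat c_s}_\F(n+1)/\Omega^{a,\hat c_s}_\F(n)$ rather than the product. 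Since Lemma \ref{lemi} guarantees that the monic norms converge to the (finite, positive) monic norms of the limit measure $\nu^{a}_{\hat c;\F}$, a zero of $\Omega^{a,\hat c}_\F$ at $n_0$ would force a zero at $n_0+1$ (otherwise the ratio blows up), hence at every $n\ge n_0$, contradicting that $\Omega^{a,\hat c}_\F$ is a nonzero polynomial. This propagates zeros \emph{upward to a contradiction} instead of propagating non-vanishing upward from $n=0$, so no initial value needs to be computed. Replacing your product-form norm identity by this ratio form repairs the argument; the rest of your induction (convergence of the inner products, orthogonality to lower degrees, identification of the leading coefficient of $q_n^{a,c;\F}$ with a nonzero multiple of $\Omega^{a,\hat c}_\F(n)$ via (\ref{duc})) is sound.
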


\begin{proof}

Notice that (\ref{hf2s}) implies that $\hat c+k\ge 1$ and then $\Gamma(n+\hat c+k)>0$, which it shows the analogy between the condition (3) in the Lemmas \ref{l3.1} and \ref{l3.1p}.

The equivalente between (1) and (2) is a direct consequence of the Definition \ref{admp}.

We next prove that (1) implies (3).

We begin by proving that $\Omega_\F ^{a,\hat c}(n)\not =0$ for all nonnegative integer $n$. Using Lemma \ref{1le}, we can take a sequence $\hat c_s\not \in \{0,-1,-2,\cdots \}$ with  $\lim _{s\to \infty}\hat c_s=\hat c$ and such that the measures
$\rho ^{a,\hat c_s}_\F$ are positive. According to Lemma \ref{l3.1}, this means that $\hat c_s$ and $\F$ are admissible (according to (\ref{defadm})). Write $c_s=\hat c_s+\max F_1+\max F_2+2$ (like in the Theorem \ref{1th}) and $\G=I(\F)$. From (\ref{qusmen}), we have that for $n\in \NN$
$$
Q_n^{a,c_s;\F}(x)=\frac{1}{\Omega^{a,2-c_s}_{\G}(-n)} q_n^{a,c_s;\F}(x),
$$
is the $n$-th monic orthogonal polynomial with respect to the positive measure $\rho_{a,\hat c_s}^\F$ (\ref{mqs}). Using (\ref{nmqt}) and (\ref{duc}), we get that
\begin{equation}\label{phx5}
\Vert Q_n^{a,c_s;\F}\Vert_2^2=C_\F\frac{a^{n}\Gamma(n+k+\hat c_s)\Omega ^{a,\hat c_s}_\F(n+1)}{(1-a)^{2n+\hat c_s} n!\Omega ^{a,\hat c_s}_\F(n)}.
\end{equation}
Write $Q_n^{a,c;\F}$ for the $n$-th monic orthogonal polynomial with respect to the positive measure $\nu^{a}_{\hat c;\F}$, $c=\hat c+\max F_1+\max F_2+2$. Since the positive measures $\rho_{a,\hat c_s}^\F$ converge to $\nu^{a}_{\hat c;\F}$ as $\hat c_s$ goes to $\hat c$, we get that the norm for the $n$-th monic orthogonal polynomial $Q_n^{a,c_s;\F}$ has to converge to the norm of $Q_n^{a,c;\F}$. Since $\lim_{s\to \infty}\Omega ^{a,\hat c_s}_\F=\Omega ^{a,\hat c}_\F$, according to (\ref{phx5}), we conclude that if for some $n_0$, $\Omega ^{a,\hat c}_\F(n_0)=0$ then $\Omega ^{a,\hat c}_\F(n)=0$, for $n\ge n_0$. Which it is a contradiction because $\Omega ^{a,\hat c}_\F$ is a non-null polynomial.

Since, we have already proved that $\Omega_\F ^{a,\hat c}(n)\not =0$ for all nonnegative integer $n$, Theorem \ref{1th} shows that $q_n^{a,\hat c;\F}$, is the $n$-th orthogonal polynomial with respect to the positive measure $\nu^{a}_{\hat c;\F}$. Hence, it has positive norm and so the condition (3) follows from (\ref{nm2}).

We finally prove that (3) implies (1). From the condition (3), we deduce that for $n\ge 0$, $\Omega^{a,\hat c}_\F(n)\not =0$ and $\Omega^{a,\hat c}_\F(z)$ has an even number of zeros in each interval $(n,n+1)$ (counting multiplicities), $n\in \NN$. Since $\Omega^{a,\hat c}_\F(z)$ is a polynomial, using Hurwitz theorem, we can take $\epsilon >0$ and a sequence $\hat c_s\not \in \{0,-1,-2,\cdots \}$ with  $\lim _{s\to \infty}\hat c_s=\hat c$ and such that for each $n\ge 0$, $\Omega^{a,\hat c_s}_\F(n)\not =0$ and $\Omega^{a,\hat c_s}_\F(z)$ has in $U_n=\{z:n< \Re z< n+1, \vert \Im z \vert <\epsilon\}$ the same number of zeros as $\Omega^{a,\hat c}_\F(z)$, that is, an even number of zeros. Since $\Omega^{a,\hat c_s}_\F(x)\in \RR$, for $x\in \RR$, we get that $\Omega^{a,\hat c_s}_\F(z)$ has in $U_n$ an even number of non-real zeros, and then has also an even number of real zeros. So, $\Omega^{a,\hat c_s}_\F(n)\Omega^{a,\hat c_s}_\F(n+1)>0$, $n\ge 0$. From (\ref{hf2s}), we have that $\hat c+k\ge 1$ and then we can also assume that
$\hat c_s+k>0$, and hence $\Gamma (n+\hat c_s+k)\Omega^{a,\hat c_s}_\F(n)\Omega^{a,\hat c_s}_\F(n+1)>0$, $n\ge 0$. Using Lemma \ref{l3.1}, we conclude that $\hat c_s$ and $\F$ are admissible (according to (\ref{defadm})). That is the measure $\rho^{a,\hat c_s}_\F$ is positive. Hence, the positivity of the measure $\nu^{a}_{\hat c;\F}$ follows from the Lemma \ref{1le}.

\end{proof}

As a consequence we have.

\begin{corollary}\label{eles} Let $a,\hat c$ be real numbers with $0<a<1$ and $\hat c\in \{-1,-2,\ldots\}$, and let $\F $ be a pair of finite sets of positive integers satisfying (\ref{hf2s}). If the measure $\nu^{a}_{\hat c;\F}$ (\ref{defnu}) is positive, then
$\Omega_\F ^{a,\hat c}(n)\not=0$, $n\ge 0$, and then we can construct orthogonal polynomials $(q_n^{a,c;\F})_n$ with respect to $\nu ^{a}_{\hat c;\F}$ using (\ref{qusmei}).
\end{corollary}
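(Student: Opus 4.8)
The plan is to read Corollary~\ref{eles} as an immediate harvest of the equivalences established in Lemma~\ref{l3.1p} together with Theorem~\ref{1th}. First I would invoke Lemma~\ref{l3.1p}: under the standing assumptions $0<a<1$, $\hat c\in\{-1,-2,\ldots\}$ and $\F$ satisfying (\ref{hf2s}), the positivity of the measure $\nu^{a}_{\hat c;\F}$ is exactly condition~(1) in that lemma, which is equivalent to condition~(3), namely $\Omega_\F^{a,\hat c}(n)\Omega_\F^{a,\hat c}(n+1)>0$ for every nonnegative integer $n$. In particular this forces $\Omega_\F^{a,\hat c}(n)\neq0$ for all $n\ge0$, which is precisely the hypothesis (\ref{hf1}) required to apply Theorem~\ref{1th}.

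Second, with (\ref{hf1}) verified, I would simply quote Theorem~\ref{1th}: the polynomials $(q_n^{a,c;\F})_n$ defined by (\ref{qusmei}) with $c=\hat c+\max F_1+\max F_2+2$ are orthogonal with respect to $\nu^{a}_{\hat c;\F}$ (and, as a bonus already contained in that theorem, are eigenfunctions of a higher order difference operator of the form (\ref{hodo})). Since $\nu^{a}_{\hat c;\F}$ is assumed positive, these are genuine orthogonal polynomials of a positive measure, and the norm formula (\ref{nm2}) together with condition~(3) of Lemma~\ref{l3.1p} confirms that each has strictly positive norm, so the construction is non-degenerate. That is the entire content of the corollary.

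There is essentially no obstacle here: the corollary is a bookkeeping consequence, and the only thing to be careful about is the logical direction one uses in Lemma~\ref{l3.1p}. I would make explicit that I am using the implication (1)$\Rightarrow$(3) of that lemma (positivity of the measure implies the non-vanishing/sign condition on $\Omega_\F^{a,\hat c}$), which is the direction whose proof in the lemma passes through approximating $\hat c$ by $\hat c_s\notin\{0,-1,-2,\ldots\}$ and using Lemma~\ref{1le} and Lemma~\ref{lemi} to transfer positivity of the norms (\ref{phx5}) to the limit. Thus the proof is: positivity of $\nu^{a}_{\hat c;\F}$ $\Longrightarrow$ condition~(3) of Lemma~\ref{l3.1p} $\Longrightarrow$ $\Omega_\F^{a,\hat c}(n)\neq0$ for all $n\ge0$, i.e.\ (\ref{hf1}) $\Longrightarrow$ Theorem~\ref{1th} applies, giving the orthogonal polynomials $(q_n^{a,c;\F})_n$ via (\ref{qusmei}). $\qed$
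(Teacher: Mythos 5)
Your proposal is correct and matches the paper's intent exactly: the corollary is stated immediately after Lemma \ref{l3.1p} as a direct consequence of the implication (1)$\Rightarrow$(3) of that lemma (which gives $\Omega_\F^{a,\hat c}(n)\neq 0$ for all $n\ge 0$, i.e.\ hypothesis (\ref{hf1})), followed by an application of Theorem \ref{1th}. The only cosmetic difference is that the paper's proof of (1)$\Rightarrow$(3) actually establishes the non-vanishing of $\Omega_\F^{a,\hat c}(n)$ first and derives the sign condition afterwards, whereas you pass through condition (3) to recover the non-vanishing; the logical content is identical.
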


The orthogonality and completeness of the exceptional Meixner polynomials (\ref{defmex}) under the admissibility of $\hat c$ and $\F$ is then a consequence of Theorems 4.3 and 4.4 in \cite{dume}.

\begin{theorem}\label{2th} Given  real numbers $a$ and $\hat c$, with $0<a<1$ and $\hat c\in \{-1,-2,\cdots \}$, and a pair $\F $ of finite sets of positive integers satisfying (\ref{hf2s}), assume that $\hat c$ and $\F$ are admissible. Then the  polynomials $m_n^{a,\hat c,\F}$, $n\in \sigma _\F$,
are orthogonal with respect to the positive measure
\begin{equation}\label{momex}
\omega_{a,\hat c}^\F=\sum_{x=0}^\infty \frac{a^x\Gamma(x+\hat c+k)}{x!\Omega_\F^{a,\hat c}(x)\Omega_\F^{a,\hat c}(x+1)}\delta_x,
\end{equation}
with
$$
\Vert m_n^{a,\hat c,\F}\Vert _2^2=\frac{a^{n-u_\F +k_1-2k}}{(1-a)^{\hat c+2n-2u_\F-k}}\prod_{h\in \Hh}(n-u_\F-h),
$$
where the finite set of integers $\Hh$ is defined by (\ref{defH}).
Moreover the linear combinations of the  polynomials $m_n^{a,\hat c;\F}$, $n\in \sigma _\F$, are dense in $L^2(\omega_{a,\hat c}^{\F})$.
\end{theorem}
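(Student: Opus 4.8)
The plan is to deduce Theorem \ref{2th} from its non-singular counterpart, \cite[Theorems 4.3 and 4.4]{dume}, by the same limiting device that was used for the second order difference equation right before Theorem \ref{th3.3}, the passage to the limit being modelled on Lemma \ref{lemi}.

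First, since $\hat c$ and $\F$ are admissible, the measure $\nu^{a}_{\hat c;\F}$ is positive, so part (3) of Lemma \ref{l3.1p} gives $\Omega^{a,\hat c}_\F(n)\Omega^{a,\hat c}_\F(n+1)>0$ for every $n\ge0$; combined with $\hat c+k\ge1$ (whence $\Gamma(x+\hat c+k)>0$ for $x\in\NN$) this already shows that $\omega^\F_{a,\hat c}$ is a positive measure. Next, by the last assertion of Lemma \ref{1le} we may fix a sequence $\hat c_s\not\in\{0,-1,-2,\dots\}$, $\hat c_s>\hat c-1$, with $\lim_s\hat c_s=\hat c$ and such that each $\rho^\F_{a,\hat c_s}$ is positive; by Lemma \ref{l3.1} this means precisely that each pair $(\hat c_s,\F)$ is admissible in the sense of (\ref{defadm}), so \cite[Theorems 4.3 and 4.4]{dume} apply: for every $s$ the polynomials $m_n^{a,\hat c_s;\F}$, $n\in\sigma_\F$, are orthogonal with respect to the positive measure $\omega^\F_{a,\hat c_s}=\sum_{x\ge0}\frac{a^x\Gamma(x+\hat c_s+k)}{x!\,\Omega^{a,\hat c_s}_\F(x)\Omega^{a,\hat c_s}_\F(x+1)}\delta_x$, their linear span is dense in $L^2(\omega^\F_{a,\hat c_s})$, and $\Vert m_n^{a,\hat c_s;\F}\Vert_2^2$ is given by the explicit formula of \cite[Theorem 4.3]{dume}.

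Then I would let $\hat c_s\to\hat c$. Because the entries of the determinant (\ref{defmex}) (and of (\ref{defomegm})) are polynomials in the two variables $x$ and $\hat c$, one has $m_n^{a,\hat c_s;\F}\to m_n^{a,\hat c;\F}$ and $\Omega^{a,\hat c_s}_\F\to\Omega^{a,\hat c}_\F$ uniformly on compact sets, with all coefficients bounded in $s$; in particular $|m_n^{a,\hat c_s;\F}(x)|\le C_n(1+x)^n$ uniformly in $s$, and $\omega^\F_{a,\hat c_s}(x)\to\omega^\F_{a,\hat c}(x)$ for every $x\in\NN$. The step that requires care is a uniform domination of the weights. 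Write $D=\sum_{f\in F_1}f+\sum_{f\in F_2}f-\binom{k_1}{2}-\binom{k_2}{2}$ for the degree of $\Omega^{a,d}_\F$; its coefficients depend polynomially on $d$ and its leading coefficient is nonzero, and $\Omega^{a,\hat c}_\F(j)\neq0$ for every $j\in\NN$ by Lemma \ref{l3.1p}. From this one gets $\delta>0$ and $s_0$ with $|\Omega^{a,\hat c_s}_\F(x)\,\Omega^{a,\hat c_s}_\F(x+1)|\ge\delta(1+x)^{2D}$ for all $x\in\NN$ and all $s\ge s_0$ (the finitely many small values of $x$ being handled by pointwise convergence and the non-vanishing of $\Omega^{a,\hat c}_\F$ there, the large ones by the leading term). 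Since also $\Gamma(x+\hat c_s+k)/x!\le C(1+x)^{\hat c+k}$ for $s\ge s_0$, this yields $x^j\,\omega^\F_{a,\hat c_s}(x)\le g_j(x):=C'(1+x)^{j+\hat c+k+2D}a^x\in\ell^1(\NN)$ uniformly in $s\ge s_0$, and the analogous bound holds for $\omega^\F_{a,\hat c}$. With this domination one argues exactly as in the proof of Lemma \ref{lemi}: passing to the limit term by term in $\langle m_n^{a,\hat c_s;\F},m_{n'}^{a,\hat c_s;\F}\rangle_{\omega^\F_{a,\hat c_s}}=\sum_x m_n^{a,\hat c_s;\F}(x)m_{n'}^{a,\hat c_s;\F}(x)\omega^\F_{a,\hat c_s}(x)$ gives the orthogonality of $(m_n^{a,\hat c;\F})_{n\in\sigma_\F}$ with respect to $\omega^\F_{a,\hat c}$, while $\Vert m_n^{a,\hat c_s;\F}\Vert_2^2\to\Vert m_n^{a,\hat c;\F}\Vert_2^2$; the value of the norm in the statement then comes out by letting $\hat c_s\to\hat c$ in the norm formula of \cite[Theorem 4.3]{dume}, the collapse of the $F_1$- and $F_2$-products together with the $\Gamma$-factor into $\prod_{h\in\Hh}(n-u_\F-h)$ being precisely the computation performed in the proof of Lemma \ref{1le}, now carried out at the point $n-u_\F\in\NN\setminus F_1$.

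Finally, the completeness is transported in the same way. The measures $\omega^\F_{a,\hat c_s}$ and $\omega^\F_{a,\hat c}$ are positive, live on the fixed countable set $\NN$, converge pointwise and admit the uniform domination just established, and the polynomials $m_n^{a,\hat c_s;\F}$, $n\in\sigma_\F$, converge pointwise; hence the argument behind the last assertion of Lemma \ref{lemi} — with the only change that here the polynomials carry the index set $\sigma_\F$ and are not monic — shows that the linear combinations of $m_n^{a,\hat c;\F}$, $n\in\sigma_\F$, are dense in $L^2(\omega^\F_{a,\hat c})$. I expect the two genuinely delicate points to be exactly these: the uniform-in-$s$ domination of $\omega^\F_{a,\hat c_s}$, which rests on the non-vanishing of $\Omega^{a,\hat c}_\F$ on $\NN$ (i.e. on Lemma \ref{l3.1p}), and checking that the completeness half of Lemma \ref{lemi} still goes through when the approximating family has gaps in its degrees.
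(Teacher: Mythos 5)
Your proposal is correct and follows essentially the same route as the paper: approximate $\hat c$ by admissible non-integer parameters $\hat c_s$, apply Theorems 4.3 and 4.4 of \cite{dume} for each $s$, and pass to the limit via the dominated-convergence scheme of Lemma \ref{lemi}, with the key uniform domination of the weights $\omega^{\F}_{a,\hat c_s}$ resting on the non-vanishing of $\Omega^{a,\hat c}_\F$ on $\NN$ supplied by Lemma \ref{l3.1p}. The paper's proof is simply a terser version of the same argument (it asserts a uniform lower bound $\vert \Omega^{a,\hat c_s}_\F(x)\vert\ge M$ on $\NN$ where you prove the sharper polynomial lower bound, and it leaves the norm computation and the gapped/non-monic adaptation of Lemma \ref{lemi} implicit), so no further comparison is needed.
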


\begin{proof}
The Lemma \ref{l3.1p} says that the admissibility of $\hat c$ and $\F$ is equivalent to
$$
\Omega_\F^{a,\hat c}(x)\Omega_\F^{a,\hat c}(x+1)>0, \quad x\in \NN.
$$
We can then take as before a sequence of numbers $\hat c_s\not\in \{0,-1,-2,\cdots \}$ with $\lim_{s\to \infty}\hat c_s=\hat c$ and such that
$$
\Omega_\F^{a,\hat c_s}(x)\Omega_\F^{a,\hat c_s}(x+1)>0, \quad x\in \NN.
$$
Since
$$
\lim_{s\to \infty}\Omega^{a,\hat c_s}_\F(z)=\Omega^{a,\hat c}_\F(z),
$$
uniformly in compact set of $\CC$, we get that there exists $M>0$ such that $\vert \Omega^{a,\hat c_s}_\F(x)\vert\ge M$, $x\in \NN$. Hence
$$
\vert \omega_{a,\hat c_s}^\F(x)\vert \le \frac{a^x\Gamma(x+\hat c+k)}{M^2x!}.
$$
The Theorem follows easily from the
Theorems 4.3 and 4.4 in \cite{dume}, and the Lemma \ref{lemi} in the Preliminaries.

\end{proof}

\section{New exceptional Laguerre orthogonal polynomials}\label{sect6}
As in \cite{dume} we can construct exceptional Laguerre polynomials by taking limit as $a$ goes to $1$ in the exceptional Meixner polynomials constructed in the previous Section using the basic limit
(\ref{blmel}).

Given a negative integer $\hat \alpha\in \{-1,-2,\cdots \}$ and a pair $\F=(F_1,F_2)$ of finite sets of positive integers, writing $\hat c=\hat \alpha +1$, using the expression (\ref{defmexa}) for the polynomials $m_n^{a,\hat c;\F}$, $n\in\sigma_\F$, setting $x\to x/(1-a)$ and  taking limit as $a\to 1$, we get (up to normalization constants) the polynomials, $n\in \sigma _\F$,
\begin{equation}\label{deflax}
L_n^{\hat \alpha ;\F}(x)= \left|
  \begin{array}{@{}c@{}cccc@{}c@{}}
    & (L_{n-u_\F}^{\hat \alpha})^{(j-1)}(x) & &\hspace{-.6cm}{}_{1\le j\le k+1} \\
    \dosfilas{(L_{f}^{\hat \alpha})^{(j-1)}(x) }{f\in F_1} \\
    \dosfilas{L_{f}^{\hat \alpha +j-1}(-x) }{f\in F_2}
  \end{array}
  \right|.
\end{equation}
More precisely
\begin{equation}\label{lim1}
\lim_{a\to 1}(a-1)^{n-(k_1+1)k_2}m_n^{a,\hat c;\F}\left(\frac{x}{1-a}\right)=(-1)^{\binom{k+1}{2}+\sum_{f\in F_2}f}L_n^{\hat \alpha ;\F}(x)
\end{equation}
uniformly in compact sets.

Notice that $L_n^{\hat \alpha ;\F}$ is a polynomial of degree $n$ with leading coefficient equal to
$$
(-1)^{n-u_\F+\sum_{f\in F_1}f} \frac{V_{F_1}V_{F_2}\prod_{f\in F_1}(f-n+u_\F)}{(n-u_\F)!\prod_{f\in F_1}f!\prod_{f\in F_2}f!},
$$
where the integer $u_\F$ is defined in (\ref{defuf}) and $V_F$ is the Vandermonde determinant defined by (\ref{defvdm}).

We define the associated polynomial
\begin{equation}\label{defhom}
\Omega _{\F}^{\hat \alpha}(x)=
\left|
  \begin{array}{@{}c@{}cccc@{}c@{}}
    &  & &\hspace{-.9cm}{}_{1\le j\le k} \\
    \dosfilas{(L_{f}^{\hat \alpha})^{(j-1)}(x) }{f\in F_1} \\
    \dosfilas{L_{f}^{\hat \alpha +j-1}(-x) }{f\in F_2}
  \end{array}
  \right|.
\end{equation}
$\Omega_{\F}^{\hat \alpha}$ is a polynomials of degree $u_\F+k_1$.

The polynomials $L_n^{\hat \alpha ;\F}$, $n\in \sigma_\F$, are eigenfunctions of a second order differential operator.

\begin{theorem}\label{th5.1} Given a real number $\hat \alpha=-1,-2,\cdots $ and a pair $\F$ of finite sets of positive integers, the polynomials $L_n^{\alpha;\F}$, $n\in \sigma _\F$,
are common eigenfunctions of the second order differential operator
\begin{equation}\label{sodolax}
D_F=x\partial ^2+h_1(x)\partial+h_0(x),
\end{equation}
where $\partial=d/dx$ and
\begin{align}\label{jph1}
h_1(x)&=\alpha +k+1-x-2x\frac{(\Omega_\F^{\hat \alpha})'(x)}{\Omega_\F^{\hat \alpha}(x)},\\\label{jph2}
h_0(x)&=-k_1-u_\F +(x-\alpha -k)\frac{(\Omega_\F^{\hat \alpha})'(x)}{\Omega_\F^{\hat \alpha}(x)}+x\frac{(\Omega_\F^{\hat \alpha})''(x)}{\Omega_\F^{\hat \alpha}(x)}.
\end{align}
More precisely $D_\F(L_n^{\alpha;\F})=-nL_n^{\alpha;\F}(x)$.
\end{theorem}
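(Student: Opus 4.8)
The plan is to obtain Theorem \ref{th5.1} as a limiting case of Theorem \ref{th3.3}, exactly paralleling the way the exceptional Laguerre polynomials $L_n^{\hat\alpha;\F}$ are themselves produced as limits of the exceptional Meixner polynomials $m_n^{a,\hat c;\F}$ via (\ref{lim1}). First I would fix $\hat c=\hat\alpha+1$ and take a sequence $\hat c_s\notin\{0,-1,-2,\cdots\}$ with $\hat c_s\to\hat c$, so that Theorem 3.3 of \cite{dume} applies and $m_n^{a,\hat c_s;\F}$ is a common eigenfunction of the second order difference operator $D_\F=h_{-1}(x)\Sh_{-1}+h_0(x)\Sh_0+h_1(x)\Sh_1$ of Theorem \ref{th3.3}, with eigenvalue $n-u_\F$ (after the shift of index). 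The coefficients $h_{-1},h_0,h_1$ are rational functions of $x$ built from $\Omega_\F^{a,\hat c_s}$ and $\Lambda_\F^{a,\hat c_s}$.

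The core of the argument is a careful limit computation: set $x\to x/(1-a)$, multiply the operator by an appropriate power of $(1-a)$, and let $a\to1$. One checks that $\Sh_{\pm1}$ applied to a function of $x/(1-a)$ becomes, after rescaling, an approximation to $I\pm(1-a)\partial+\tfrac{(1-a)^2}{2}\partial^2+\cdots$, so that the combination $h_{-1}\Sh_{-1}+h_0\Sh_0+h_1\Sh_1$ converges, after the correct normalization, to a second order differential operator $x\partial^2+h_1(x)\partial+h_0(x)$. For the coefficient functions one uses that $\lim_{a\to1}(a-1)^{\deg}\Omega_\F^{a,\hat c}(x/(1-a))$ is, up to a constant, the Laguerre determinant $\Omega_\F^{\hat\alpha}(x)$ of (\ref{defhom}) — this is the determinantal analogue of (\ref{blmel}) applied column by column, just as (\ref{lim1}) was derived — and similarly $\Lambda_\F^{a,\hat c}$ rescales to a determinant whose leading behaviour, combined with the $\Delta\big(\cdots\big)$ term in $h_0$, produces the $x(\Omega_\F^{\hat\alpha})''/\Omega_\F^{\hat\alpha}$ and $(x-\alpha-k)(\Omega_\F^{\hat\alpha})'/\Omega_\F^{\hat\alpha}$ pieces. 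Tracking the constants $u_\F$, $k_1$, $k$ through the limit gives precisely (\ref{jph1}) and (\ref{jph2}), and the eigenvalue $n-u_\F$ of the difference operator becomes $-(n-u_\F)-(\text{shift})$; matching the normalization in (\ref{deflax}) and (\ref{lim1}) turns this into $-n$, giving $D_\F(L_n^{\hat\alpha;\F})=-nL_n^{\hat\alpha;\F}$.

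Alternatively, and perhaps more cleanly, I would bypass the operator limit and argue directly: by (\ref{lim1}) the polynomials $(1-a)^{n-(k_1+1)k_2}m_n^{a,\hat c;\F}(x/(1-a))$ converge uniformly on compacts to a constant multiple of $L_n^{\hat\alpha;\F}(x)$; the rescaled and renormalized difference operators converge coefficientwise (uniformly on compacts, since the $\Omega$'s converge uniformly and are eventually bounded away from zero on compacts avoiding the zeros of $\Omega_\F^{\hat\alpha}$) to the differential operator $D_\F$ of (\ref{sodolax}); hence the eigenvalue equation passes to the limit, yielding $D_\F(L_n^{\hat\alpha;\F})=-nL_n^{\hat\alpha;\F}$ first as an identity of rational functions and then, since both sides are polynomials, everywhere.

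The main obstacle I anticipate is the bookkeeping in the middle $h_0$ term: the operator $\Delta\big(a(x+\hat c+k-1)\Lambda_\F^{a,\hat c}(x)/((a-1)\Omega_\F^{a,\hat c}(x))\big)$ mixes a discrete derivative $\Delta$ with the $\Sh_0$ part of the operator, and under the scaling $x\to x/(1-a)$ one must expand this to the right order in $(1-a)$ to see both the first and second derivative terms of $\Omega_\F^{\hat\alpha}$ emerge with the correct coefficients, while the spurious lower order terms cancel against contributions from $h_{-1}\Sh_{-1}$ and $h_1\Sh_1$. Verifying that all the $O(1/(1-a))$ and $O(1)$ pieces assemble exactly into (\ref{jph1})--(\ref{jph2}) — in particular that the $\Lambda_\F^{a,\hat c}$ determinant rescales to the determinant obtained from $\Omega_\F^{\hat\alpha}$ by replacing the last column with its derivative, which is what $(\Omega_\F^{\hat\alpha})'$ amounts to by a standard Jacobi-type identity for determinants of derivatives — is the one genuinely delicate calculation; everything else is the now-routine ``take $\hat c_s\to\hat c$, then $a\to1$'' machinery already used twice in the paper.
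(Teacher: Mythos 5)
Your proposal follows essentially the same route the paper (implicitly) takes: Theorem \ref{th5.1} is stated without a written proof precisely because it is obtained, as the text around (\ref{deflax})--(\ref{lim1}) indicates, by setting $\hat c=\hat\alpha+1$, rescaling $x\to x/(1-a)$, and letting $a\to1$ in the difference operator of Theorem \ref{th3.3}, which is exactly your argument (your second, ``pass the eigenvalue equation to the limit'' variant being the cleanest way to avoid the coefficient bookkeeping you worry about). The only slip is minor: the eigenvalue in Theorem \ref{th3.3} is $n$, not $n-u_\F$, and it simply becomes $-n$ in the limit, consistent with the classical Meixner-to-Laguerre degeneration.
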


In the rest of this Section we will assume that $\hat \alpha\in \{-2,-3,\cdots \}$ and
\begin{equation}\label{hf2l}
\{0,1,2,\cdots , -\hat \alpha-1\}\subset F_1\cup (-\hat \alpha-1-F_2).
\end{equation}
We do not need to consider neither the case $\hat \alpha=-1$ nor $0\in F_1\cup F_2$. Indeed, it is not difficult to check that in those cases and up to a sign the polynomial
$L_n^{\hat \alpha;\F}$ is equal to $L_n^{d;\U}$, where $d=\hat \alpha+s_1+s_2$ and $\U=(U_1,U_2)$ are given by (\ref{cmlp}).

We also straightforwardly have
\begin{equation}\label{rromh}
L_{u_\F}^{\alpha ;\F}(x)=(-1)^{\binom{s_\F}{2}+s_\F k_1}\Omega_{\F_\Downarrow }^{\alpha +s_\F}(x),
\end{equation}
where the positive integer $s_\F$ and the pair $\F_\Downarrow$ are defined by (\ref{defs0f}) and (\ref{deffd}), respectively.

The admissibility condition for the new families of exceptional Laguerre polynomials is then the admissibility of $\hat \alpha +1$ and $\F$ according to the Definition \ref{admp}. We need the following Lemma (the proof is similar to that of part (4) in Lemma 2.6 of \cite{dume}, and it is omitted).

\begin{lemma}\label{ppt} Let $\hat \alpha$ and $\F$ be an integer $\hat \alpha\in \{-2,-3,\cdots \}$ and a pair of finite set of positive integers, respectively, satisfying (\ref{hf2l}), and assume that $\hat \alpha+1$ and $\F$ are admissible (according to the Definition \ref{admp}). Then $\hat \alpha +s_\F\not =-1$ and
\begin{enumerate}
\item if $\hat \alpha +s_\F\le -2$ then $\hat \alpha +1+s_\F$ and $\F_\Downarrow$ satisfy (\ref{hf2l}) and are also admissible (according to the Definition \ref{admp});
\item if $\hat \alpha +s_\F\ge 0$ then $\hat \alpha +1+s_\F$ and $\F_\Downarrow$ are also admissible (according to (\ref{defadm})).
\end{enumerate}
\end{lemma}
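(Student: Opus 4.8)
The plan is to reduce the statement for the pair $(\hat\alpha+1,\F)$, which involves the Laguerre-type admissibility of Definition \ref{admp}, to the corresponding Meixner statement by using the limit $a\to1$ and the behaviour of the sets $F_1,F_2$ under the operation $\F\mapsto\F_\Downarrow$. First I would record that, by definition, $\hat\alpha+1$ and $\F$ being admissible means $\prod_{h\in\Hh}(x-h)\ge0$ for all $x\in\NN\setminus F_1$, where $\Hh=\Hh(-\hat\alpha-1,F_1,F_2)$. The key combinatorial fact to extract from (\ref{lum}) and (\ref{defffd}) is how $s_{F_1}$ and $s_{F_2}$ interact with the ``blocking'' condition (\ref{hf2l}): since $\{0,1,\dots,-\hat\alpha-1\}\subset F_1\cup(-\hat\alpha-1-F_2)$, the initial segments of $F_1$ and $-\hat\alpha-1-F_2$ (equivalently of $F_2$) overlap in a controlled way, and the number $s_\F=s_{F_1}$ measures exactly the length of the initial run $\{1,\dots,s_{F_1}-1\}\subset F_1$. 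I would use this to show first that $\hat\alpha+s_\F\ne-1$: if $\hat\alpha+s_\F=-1$ then $-\hat\alpha-1=s_\F-1$, and one checks directly from (\ref{hf2l}) and the structure of $F_1$ near its initial segment that this forces either $0\in F_1$ (excluded) or a contradiction with admissibility at the point $x=s_{F_1}-1$ or $x=s_\F$.

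Next I would treat the two cases. In case (1), $\hat\alpha+s_\F\le-2$, so $\widetilde\alpha:=\hat\alpha+1+s_\F\le-1$ is again a negative integer $\le-1$; in fact $\widetilde\alpha\le-2$ unless $\hat\alpha+s_\F=-2$, and I would check separately that the borderline value still lands in the regime where Definition \ref{admp} applies (or, if $\widetilde\alpha=-1$, that one falls back to the ordinary Laguerre situation, matching the dichotomy in the statement). The relation to verify is that $\Hh(-\hat\alpha-1,F_1,F_2)$ and $\Hh(-\widetilde\alpha,(F_1)_\Downarrow,F_2)$ coincide up to shifting out the initial block $\{1,\dots,s_{F_1}-1\}$ which is common to $F_1$ and to the interval $\{0,1,\dots,-\hat\alpha-1\}$; using (\ref{lum}) for $F_1$, the product $\prod_{h\in\Hh}(x-h)$ for $(\hat\alpha+1,\F)$ restricted to $x\notin F_1$ matches (after the change $x\mapsto x-s_{F_1}$, exactly as in Remark \ref{rems}) the product for $(\widetilde\alpha,\F_\Downarrow)$ restricted to $x\notin(F_1)_\Downarrow$. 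Hence the sign condition (\ref{defadm2}) transfers verbatim. That $\F_\Downarrow$ still satisfies (\ref{hf2l}) with parameter $\widetilde\alpha$ is the identity $\{0,\dots,-\widetilde\alpha\}\subset(F_1)_\Downarrow\cup(-\widetilde\alpha-F_2)$, which again follows from subtracting $s_{F_1}$ from the blocking condition for $\F$ and noting $-\widetilde\alpha=-\hat\alpha-1-s_{F_1}+(\text{shift})$ works out since the removed block has length $s_{F_1}-1$.

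In case (2), $\hat\alpha+s_\F\ge0$, so removing the whole initial block overshoots the interval $\{0,\dots,-\hat\alpha-1\}$; now $\widetilde\alpha=\hat\alpha+1+s_\F\ge1$ is a positive integer, and I would verify the classical admissibility (\ref{defadm}) for $\widetilde\alpha$ and $\F_\Downarrow$ directly, namely that $\prod_{f\in(F_1)_\Downarrow}(x-f)\prod_{f\in F_2}(x+\widetilde\alpha+f)\ge0$ for $x\in\NN$ (the Gamma factor is automatically positive since $\widetilde\alpha\ge1$). Here I would appeal to Lemma \ref{l3.1} in the Meixner/Laguerre form: admissibility of $\widetilde\alpha$ and $\F_\Downarrow$ is equivalent to a sign condition on $\Omega_{\F_\Downarrow}^{\widetilde\alpha-1}$ at the integers, and via (\ref{rromh}) this $\Omega$ is, up to a sign, the polynomial $L_{u_\F}^{\hat\alpha;\F}$; the positivity of $\nu^{a}_{\hat\alpha+1;\F}$ (equivalently of the limiting Laguerre weight) controls exactly the sign of $\Omega_\F^{\hat\alpha}(n)\Omega_\F^{\hat\alpha}(n+1)$ by Lemma \ref{l3.1p}, which by the product structure of $\Omega_\F^{\hat\alpha}$ in terms of $\Omega_{\F_\Downarrow}$ gives the required signs. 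The main obstacle I expect is the bookkeeping in case (2): correctly tracking how the finitely many ``missing'' points and the leftover part $\{0,\dots,-\hat\alpha-1\}\setminus(\text{block})$ redistribute between the new $F_1$-exclusions and the new $F_2$-shift, and making sure the endpoint cases $\hat\alpha+s_\F=0$ (landing on $\widetilde\alpha=1$, ordinary Laguerre with parameter $0$) and $\hat\alpha+s_\F=-2$ are handled without an off-by-one error. Since the excerpt says the proof is ``similar to that of part (4) in Lemma 2.6 of \cite{dume}'' and omitted, I would in practice present only this reduction scheme and cite that lemma for the routine verifications.
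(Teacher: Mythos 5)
The paper does not actually prove this lemma --- it states that the proof is analogous to Lemma 2.6(4) of \cite{dume} and omits it --- so your proposal has to stand on its own. Your overall strategy (shift everything by $s_{F_1}$ and transfer the sign condition) is the right one, and case (1) is essentially correct in outline; but two of your three steps have concrete problems. For the claim $\hat\alpha+s_\F\neq-1$: you wrote $-\hat\alpha-1=s_\F-1$ (it should be $-\hat\alpha-1=s_\F$), and the contradiction you gesture at is not the right one. The clean argument needs no admissibility at all: $s_\F=-\hat\alpha-1$ lies in $\{0,\dots,-\hat\alpha-1\}$ but $s_\F\notin F_1$ by definition of $s_{F_1}$, so (\ref{hf2l}) forces $s_\F\in-\hat\alpha-1-F_2$, i.e.\ $0\in F_2$, contradicting that $F_2$ consists of positive integers. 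In case (1), $\Hh$ must be read as a multiset, and the new set is not $\Hh-s_{F_1}$ but $(\Hh-s_{F_1})\sqcup\{-s_{F_1}\}$: one checks from $F_1\sqcup(-\hat c-F_2)=\Hh\sqcup\{0,\dots,-\hat c\}$ that
$\prod_{h\in\Hh'}(y-h)=(y+s_{F_1})\prod_{h\in\Hh}(y+s_{F_1}-h)$.
The extra factor is positive for $y\in\NN$, so your conclusion survives, but the products do not ``match verbatim''.

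The genuine gap is in case (2). Your stated goal there --- verify (\ref{defadm}) directly for $d=\hat\alpha+1+s_\F\ge1$ and $\F_\Downarrow$ --- is correct, but the mechanism you then propose does not work: Lemma \ref{l3.1} concerns the Meixner Casoratian $\Omega_\F^{a,d}$, not the Laguerre Wronskian $\Omega_\F^{\hat\alpha}$ of (\ref{defhom}); identity (\ref{rromh}) relates $L_{u_\F}^{\hat\alpha;\F}$ to $\Omega_{\F_\Downarrow}^{\hat\alpha+s_\F}$ but gives no sign information at integers; and the ``product structure of $\Omega_\F^{\hat\alpha}$ in terms of $\Omega_{\F_\Downarrow}$'' that your argument hinges on is established nowhere. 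What does work is the same elementary shift as in case (1): for $x\in\NN$, putting $y=x+s_{F_1}$ and using $s_{F_1}>-\hat\alpha-1$ one gets
\begin{equation*}
\prod_{f\in(F_1)_\Downarrow}(x-f)\prod_{f\in F_2}(x+\hat\alpha+1+s_\F+f)
=\frac{y\,\prod_{h\in\Hh}(y-h)}{\prod_{j=-\hat\alpha}^{s_{F_1}-1}(y-j)},
\end{equation*}
where every factor in the denominator is $\ge1$ and $y>0$; the right-hand side is $\ge0$ by the admissibility of $\hat\alpha+1$ and $\F$ when $y\notin F_1$ (equivalently $x\notin(F_1)_\Downarrow$), and the left-hand side vanishes when $x\in(F_1)_\Downarrow$. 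Since $\Gamma(x+d)>0$ for $d\ge1$, this is exactly (\ref{defadm}). I recommend replacing the $\Omega$-based detour by this computation.
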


We are now ready to show that the admissibility of $\hat \alpha +1$ and $\F$ is equivalent to the non existence of nonnegative zeros for the polynomial $\Omega_\F^{\hat \alpha }$.

\begin{theorem} Let $\hat \alpha$ and $\F$ be an integer $\hat \alpha\in \{-2,-3,\cdots \}$ and a pair of finite set of positive integers, respectively, satisfying (\ref{hf2l}). The following conditions are equivalent
\begin{enumerate}
\item $\hat\alpha+1$ and $\F$ are admissible (according to the Definition \ref{admp}).
\item $\Omega_\F^{\hat \alpha }(x)\not =0$, $x\in [0,+\infty)$.
\end{enumerate}
\end{theorem}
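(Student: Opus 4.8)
The plan is to reduce the Laguerre statement to the already-established Meixner statement (Lemma~\ref{l3.1p}) by a limiting argument, exactly in the spirit of how the exceptional Laguerre polynomials themselves were obtained from the exceptional Meixner ones via the basic limit (\ref{blmel}) and the limit (\ref{lim1}). Set $\hat c=\hat\alpha+1$, so $\hat c\in\{-1,-2,\dots\}$ and condition (\ref{hf2l}) for $\hat\alpha$ is precisely condition (\ref{hf2s}) for $\hat c$. The admissibility of $\hat\alpha+1$ and $\F$ is \emph{by definition} the admissibility of $\hat c$ and $\F$ in the sense of Definition~\ref{admp}, so Lemma~\ref{l3.1p} already gives us the equivalence of (1) with the positivity of $\Omega_\F^{a,\hat c}(n)\Omega_\F^{a,\hat c}(n+1)$ for all $n\in\NN$ and all (equivalently, one) $0<a<1$. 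Thus what remains is to connect the discrete positivity condition on the Meixner associated polynomial $\Omega_\F^{a,\hat c}$ with the condition $\Omega_\F^{\hat\alpha}(x)\neq0$ on $[0,+\infty)$ for the Laguerre associated polynomial.

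First I would record the appropriate scaling limit relating the two associated polynomials: from the determinantal definitions (\ref{defomegm}) and (\ref{defhom}), using the column-combination trick (as in (\ref{defmexa})) together with the basic limit (\ref{blmel}) and its derivatives, one gets, up to an explicit nonzero constant and a power of $(a-1)$,
\[
\lim_{a\to1}(a-1)^{\,?}\,\Omega_\F^{a,\hat c}\!\left(\frac{x}{1-a}\right)=c_\F\,\Omega_\F^{\hat\alpha}(x),
\]
uniformly on compact subsets of $\CC$, with $c_\F\neq0$ (this is the analogue for $\Omega$ of the limit (\ref{lim1}) for $m_n^{a,\hat c;\F}$, and is essentially contained in the computations of \cite{dume}; I would cite that rather than redo it).

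With this limit in hand, the proof of (2)$\Rightarrow$(1) is the easy direction: if $\Omega_\F^{\hat\alpha}$ has no zero on $[0,+\infty)$, then it has constant sign there, and a Hurwitz-type argument applied to the uniform limit above lets me choose $a$ close to $1$ so that $\Omega_\F^{a,\hat c}$ has no zeros on $[0,+\infty)$ either and hence $\Omega_\F^{a,\hat c}(n)\Omega_\F^{a,\hat c}(n+1)>0$ for all $n\in\NN$; then Lemma~\ref{l3.1p} yields positivity of $\nu^a_{\hat c;\F}$, i.e.\ admissibility of $\hat c$ and $\F$, which is admissibility of $\hat\alpha+1$ and $\F$. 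For (1)$\Rightarrow$(2) I would argue by contradiction and use the structural result (\ref{rromh}), namely $L_{u_\F}^{\alpha;\F}=\pm\,\Omega_{\F_\Downarrow}^{\alpha+s_\F}$, together with Lemma~\ref{ppt}: admissibility of $\hat\alpha+1$ and $\F$ forces $\hat\alpha+s_\F\neq-1$ and passes admissibility down to $\F_\Downarrow$ with shifted parameter, so one can induct on the size of $F_1$ (the case $\F_\Downarrow$ having strictly fewer parameters, or $\hat\alpha+s_\F\ge0$ landing in the ordinary admissible regime where the non-vanishing of the Laguerre $\Omega$-polynomial on $[0,+\infty)$ is already known from \cite{dume}). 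The base case and the reduction both rest on identifying the sign of $\Omega_\F^{\hat\alpha}$ at $x\ge0$ with the sign pattern governing positivity of the orthogonalizing measure for $L_n^{\hat\alpha;\F}$, which mirrors part~(3) of Lemma~\ref{l3.1p}.

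The main obstacle I anticipate is making the (1)$\Rightarrow$(2) direction fully rigorous: the clean limiting argument only transfers the discrete positivity $\Omega_\F^{a,\hat c}(n)\Omega_\F^{a,\hat c}(n+1)>0$ (a condition at integers) and does \emph{not} immediately rule out zeros of $\Omega_\F^{\hat\alpha}$ at \emph{non-integer} points of $[0,+\infty)$, which is what (2) demands. Bridging that gap is exactly why the inductive descent via (\ref{rromh}) and Lemma~\ref{ppt} is needed — it replaces the ``no zeros between consecutive integers'' type reasoning (used inside the proof of Lemma~\ref{l3.1p} via Hurwitz) by an honest induction that eventually reaches a case covered by the known exceptional Laguerre theory in \cite{dume}. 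Getting the bookkeeping of $s_\F$, the parameter shift $\hat\alpha+s_\F$, and the two alternatives of Lemma~\ref{ppt} to close up correctly is the delicate part; everything else is routine limiting and determinant manipulation.
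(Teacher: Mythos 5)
Your direction (2)$\Rightarrow$(1) is a legitimate alternative to the paper's: the paper stays in the Laguerre world, perturbing $\hat\alpha_s\to\hat\alpha$ with Hurwitz and then invoking Theorem 1.2 of \cite{duma} and Lemma \ref{1le}, whereas you rescale back to the Meixner world via $a\to1$ and invoke Lemma \ref{l3.1p}. Both routes need the same (routine) continuity-of-roots care to upgrade locally uniform convergence to ``no zeros on all of $[0,+\infty)$'' --- legitimate here because the degree $u_\F+k_1$ and the normalized leading coefficient survive the limit --- so this half is fine.

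The direction (1)$\Rightarrow$(2), however, has a genuine gap. The descent via (\ref{rromh}) and Lemma \ref{ppt} only transports \emph{admissibility} from $(\hat\alpha,\F)$ to $(\hat\alpha+s_\F,\F_\Downarrow)$; it does not by itself transport the hypothetical zero $x_0\in[0,+\infty)$ of $\Omega_\F^{\hat\alpha}$ to a zero of $\Omega_{\F_\Downarrow}^{\hat\alpha+s_\F}$ at the same point, and without that the induction has nothing to contradict when it reaches the ordinary regime $\alpha\ge0$. Note that (\ref{rromh}) relates $\Omega_{\F_\Downarrow}^{\hat\alpha+s_\F}$ to $L_{u_\F}^{\hat\alpha;\F}$, not to $\Omega_\F^{\hat\alpha}$, so the missing bridge is the implication $\Omega_\F^{\hat\alpha}(x_0)=0\Rightarrow L_n^{\hat\alpha;\F}(x_0)=0$ for all $n\in\sigma_\F$, in particular for $n=u_\F$. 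The paper obtains this from the norm identity of Lemma 6.2 and Corollary 6.4 of \cite{dume},
\begin{equation*}
\int \bigl(L_{n}^{\hat \alpha _s;\F}(x)\bigr)^2\frac{x^{\hat \alpha _s+k}e^{-x}}{\bigl(\Omega_\F^{\hat \alpha_s}(x)\bigr)^2}\,dx =a^{-n+u_\F}\rho_\F^{a,\hat c_s}(n-u_\F),
\end{equation*}
valid along an admissible sequence $\hat\alpha_s\to\hat\alpha$: the right-hand side converges to a finite quantity, while a zero of $\Omega_\F^{\hat\alpha}$ at $x_0$ would make the limiting weight non-integrable unless $L_n^{\hat\alpha;\F}(x_0)=0$. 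Your closing remark about ``identifying the sign of $\Omega_\F^{\hat\alpha}$ with the sign pattern governing positivity of the orthogonalizing measure'' does not supply this step: the weight $x^{\hat\alpha+k}e^{-x}/(\Omega_\F^{\hat\alpha})^2$ is nonnegative wherever it is defined no matter where $\Omega_\F^{\hat\alpha}$ vanishes, so the obstruction created by a zero is integrability, not sign --- and that integrability argument is precisely the analytic input your proposal is missing.
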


\begin{proof}
We fist prove (1) $\Rightarrow $ (2).
Assume on the contrary that there exists $x_0\in [0,+\infty)$ with $\Omega_\F^{\hat \alpha }(x_0)=0$.
Write $\hat c=\hat \alpha +1$. The admissibility of $\hat c$ and $\F$ is equivalent to the positivity of the measure $\nu^{a}_{\hat c;\F}$ (\ref{defnu}) for any $0<a<1$. According to Lemma \ref{1le}, we can take numbers $\hat c_s\not\in \{0,-1,-2,\cdots \}$ with $\lim _{s\to \infty}\hat c_s=\hat c$ and such that the measure $\rho^{a,\hat c_s}_\F$ are also positive. This means that $\hat c_s=\hat \alpha_s+1$ and $\F$ are admissible (according to (\ref{defadm})). Using the Lemma 6.2 and the Corollary 6.4 in \cite{dume} we deduce that $\hat \alpha _s+k>-1$ and $\Omega_\F^{\hat \alpha_s }(x)\not =0$, $x\in [0,+\infty)$, and
$$
\int (L_{n}^{\hat \alpha _s;\F}(x))^2\frac{x^{\hat \alpha _s+k}e^{-x}}{(\Omega_\F^{\hat \alpha_s}(x))^2}dx =a^{-n+u_\F}\rho_\F^{a,\hat c_s}(n-u_\F),\quad n\in \sigma_\F.
$$
Since $\lim_{s\to \infty} L_{n}^{\hat \alpha _s;\F}(z)=L_{n}^{\hat \alpha ;\F}(z)$ and $\lim_{s\to \infty} \Omega_\F^{\hat \alpha_s}(z)=\Omega_\F^{\hat \alpha}(z)$, uniformly in compact set of the complex plane, we conclude that $L_n^{\hat \alpha ;\F}(x_0)=0$, $n\in\sigma _\F$. Using (\ref{rromh}), we get that $\Omega_{\F_\Downarrow}^{\hat \alpha +s_\F}(x_0)=0$. 
Repeating the process using the Lemma \ref{ppt}, after a certain number of steps we will find a nonnegative integer $\alpha\ge 0$ and a pair $\tilde \F$ of finite sets of positive integers such that $\alpha+1$ and $\tilde \F$ are admissible (according to (\ref{defadm})) and $\Omega_{\tilde \F}^{\alpha}(x_0)=0$. But this contradicts \cite[Lemma 6.2]{dume}.

We next prove (2) $\Rightarrow $ (1). From the assumption (\ref{hf2l}), we deduce that $\hat \alpha+k\ge 0$. Since $\Omega_\F^{\hat \alpha }$ is a polynomial, using Hurwitz Theorem we can take a sequence of numbers $\hat \alpha_s\not\in \{-1,-2,\cdots \}$, $\hat \alpha_s+k>-1$ with $\lim _{s\to \infty}\hat \alpha _s=\hat \alpha$ and such that $\Omega_\F^{\hat \alpha_s }(x)\not =0$, $x\in [0,+\infty)$. Using the Theorem 1.2 in \cite{duma}, we conclude that $\hat \alpha_s+1$ and $\F$ are admisible (according to (\ref{defadm})). That is, the measures
$\rho_\F^{a,\hat \alpha _s+1}$ are positive. It is then enough to apply the Lemma \ref{1le}.
\end{proof}

The following Theorem establishes the completeness of the exceptional Laguerre polynomials (the proof is analogous to that of Theorem \ref{2th} and it is omitted).

\begin{theorem}\label{2thL} Given  a negative integer $\hat \alpha\in \{-2,-3,\cdots \}$, and a pair $\F $ of finite sets of positive integers satisfying (\ref{hf2l}), assume that $\hat \alpha+1$ and $\F$ are admissible. Then the  polynomials $L_n^{\hat \alpha,\F}$, $n\in \sigma _\F$,
are orthogonal with respect to the positive weight
\begin{equation}\label{molax}
\omega_{\hat \alpha }^\F(x)=\frac{x^{\hat \alpha +k}e^{-x}}{(\Omega_\F^{\hat \alpha})^2(x)},\quad x\ge 0,
\end{equation}
with
$$
\Vert L_n^{\hat \alpha,\F}\Vert _2^2=\prod_{h\in \Hh}(n-u_\F-h),
$$
where the finite set of integers $\Hh$ is defined by (\ref{defH}) for $\hat c=\hat \alpha +1$.
Moreover the linear combinations of the  polynomials $L_n^{\hat \alpha,\F}$, $n\in \sigma _\F$, are dense in $L^2(\omega_{\hat \alpha }^\F dx)$.
\end{theorem}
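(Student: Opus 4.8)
The plan is to follow the proof of Theorem \ref{2th} step by step, replacing the passage to a non-integer Meixner parameter by a passage to a non-integer Laguerre parameter, and the dominated convergence for discrete measures by its analogue for the absolutely continuous weight $\omega_{\hat\alpha}^\F$ on $[0,+\infty)$. First, the admissibility of $\hat\alpha+1$ and $\F$ gives, by the previous Theorem, that $\Omega_\F^{\hat\alpha}(x)\not=0$ for every $x\ge 0$; being a nonzero polynomial, $\Omega_\F^{\hat\alpha}$ has finitely many zeros, all of them at a positive distance from the closed set $[0,+\infty)$, and a nonzero leading coefficient, while (\ref{hf2l}) forces $\hat\alpha+k\ge 0$. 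Exactly as in the proof of the previous Theorem, I would use Hurwitz's Theorem to choose a sequence $\hat\alpha_s\notin\{-1,-2,\cdots\}$, with $\hat\alpha_s+k>-1$ and $\lim_s\hat\alpha_s=\hat\alpha$, such that $\Omega_\F^{\hat\alpha_s}(x)\not=0$ for all $x\ge 0$. Writing $\hat c_s=\hat\alpha_s+1$, Theorem 1.2 of \cite{duma} then gives that $\hat\alpha_s+1$ and $\F$ are admissible in the sense of (\ref{defadm}), so that Lemma 6.2 and Corollary 6.4 of \cite{dume} (together with the corresponding completeness result for exceptional Laguerre polynomials there) apply: the polynomials $L_n^{\hat\alpha_s;\F}$, $n\in\sigma_\F$, are orthogonal and complete with respect to $\omega_{\hat\alpha_s}^\F(x)=x^{\hat\alpha_s+k}e^{-x}/(\Omega_\F^{\hat\alpha_s})^2(x)$, and
$$
\int (L_n^{\hat\alpha_s;\F}(x))^2\,\omega_{\hat\alpha_s}^\F(x)\,dx=a^{-n+u_\F}\rho_\F^{a,\hat c_s}(n-u_\F),\qquad n\in\sigma_\F ,
$$
the right-hand side being independent of $a$ (this is the identity already recorded in the proof of the previous Theorem).

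The technical heart of the argument is a uniform $L^1$ domination of the weights $\omega_{\hat\alpha_s}^\F$. By Hurwitz's Theorem again, for $s$ large every zero of $\Omega_\F^{\hat\alpha_s}$ lies close to a zero of $\Omega_\F^{\hat\alpha}$, hence $|\Omega_\F^{\hat\alpha_s}(x)|$ is bounded below by a positive constant on every compact subset of $[0,+\infty)$, uniformly in (large) $s$; and since the leading coefficient of $\Omega_\F^{\hat\alpha_s}$ converges to the nonzero leading coefficient of $\Omega_\F^{\hat\alpha}$, there are $R,c>0$ with $(\Omega_\F^{\hat\alpha_s})^2(x)\ge c\,x^{2(u_\F+k_1)}$ for all $x\ge R$ and all large $s$. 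Combining these bounds with the fact that $\hat\alpha_s+k$ stays bounded and bounded below by a fixed number $>-1$, one obtains, for each $n\in\NN$, a function $g_n\in L^1([0,+\infty))$ with $x^n\,\omega_{\hat\alpha_s}^\F(x)\le g_n(x)$ for all $x>0$ and all large $s$; since moreover the coefficients of the polynomials $L_j^{\hat\alpha_s;\F}$ converge (uniformly on compacta and as polynomials of fixed degree), the products $|L_n^{\hat\alpha_s;\F}(x)L_m^{\hat\alpha_s;\F}(x)\,\omega_{\hat\alpha_s}^\F(x)|$ are likewise dominated by an integrable function independent of $s$.

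Finally I would let $s\to\infty$. Since $L_n^{\hat\alpha_s;\F}\to L_n^{\hat\alpha;\F}$ and $\Omega_\F^{\hat\alpha_s}\to\Omega_\F^{\hat\alpha}$ uniformly on compact sets and $\omega_{\hat\alpha_s}^\F(x)\to\omega_{\hat\alpha}^\F(x)$ for each $x>0$, dominated convergence applied to $\int L_n^{\hat\alpha_s;\F}L_m^{\hat\alpha_s;\F}\,\omega_{\hat\alpha_s}^\F\,dx$ (for $n\not=m$) yields the orthogonality of $(L_n^{\hat\alpha;\F})_{n\in\sigma_\F}$ with respect to $\omega_{\hat\alpha}^\F$, and passing to the limit in the displayed identity gives the value of the norm, because by Lemma \ref{1le} one has $\rho_\F^{a,\hat c_s}(n-u_\F)\to\nu^{a}_{\hat\alpha+1;\F}(n-u_\F)$, so that $a^{-n+u_\F}\rho_\F^{a,\hat c_s}(n-u_\F)\to\prod_{h\in\Hh}(n-u_\F-h)$. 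The completeness of $(L_n^{\hat\alpha;\F})_{n\in\sigma_\F}$ in $L^2(\omega_{\hat\alpha}^\F\,dx)$ is then transferred from that of $(L_n^{\hat\alpha_s;\F})_{n\in\sigma_\F}$ exactly as in Lemma \ref{lemi} and in the proof of Theorem \ref{2th} (via Theorems 4.3 and 4.4 of \cite{dume}), using the same uniform domination. I expect the only real obstacle to be this last uniform control of the weights near $x=0$ and near $x=+\infty$; the remaining steps are the routine limiting argument already carried out in the Meixner case.
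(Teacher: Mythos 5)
Your proposal is correct and is precisely the argument the paper has in mind: the paper omits this proof, stating only that it is ``analogous to that of Theorem \ref{2th}'', and your write-up carries out that analogy faithfully (Hurwitz perturbation $\hat\alpha_s\to\hat\alpha$ with $\Omega_\F^{\hat\alpha_s}$ zero-free on $[0,+\infty)$, admissibility via Theorem 1.2 of \cite{duma}, the norm identity from Lemma 6.2 and Corollary 6.4 of \cite{dume}, uniform integrable domination of the weights near $0$ and at infinity, and dominated convergence plus Lemma \ref{lemi} for orthogonality, norms and completeness). You also correctly identify the one genuinely non-routine point --- the uniform control of $\omega_{\hat\alpha_s}^\F$ near $x=0$ (using $\hat\alpha_s+k>-1$ bounded away from $-1$) and at infinity (via the convergent nonzero leading coefficient of $\Omega_\F^{\hat\alpha_s}$) --- and handle it adequately.
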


The simplest example of Theorem \ref{2thL} is the case $\hat\alpha=-2$, $F_1=F_2=\{1\}$. This gives $\Hh=\emptyset$ and then trivially we have that $\hat \alpha +1$ and $\F$ are admissible. As a consequence, the polynomials
$$
L_n^{\hat \alpha,\F}(x)=\begin{vmatrix} L_{n-1}^{\hat \alpha}(x)&(L_{n-1}^{\hat \alpha})'(x)&(L_{n-1}^{\hat \alpha})''(x)\\
-x-1&-1&0\\
x-1&x&x+1
\end{vmatrix},
$$
$n=1,3,4,5,\cdots $,
are orthogonal and complete with respect to the positive weight
$$
\frac{e^{-x}}{(x^2+1)^2},\quad x\ge 0,
$$
and eigenfunctions of the second order differential operator
$$
x\left(\frac{d}{dx}\right)^2+\left(1-x-\frac{4x^2}{x^2+1}\right)\frac{d}{dx}+\left(-2+\frac{2x+2x^2}{x^2+1}\right).
$$
This is a new example of exceptional Laguerre polynomials because each one of the previous known examples is orthogonal with respect to a weight of the form
$$
\frac{x^\alpha e^{-x} dx}{\tau^2(x)},\quad x\in (0,+\infty ),
$$
where $\alpha\not =0$ and $\tau$ is a polynomial which does not vanish in $[0,+\infty)$ (see \cite{dume} or \cite{BK}).

\section{New Hahn-Krall orthogonal polynomials}\label{sect4}
In this Section we construct new Hahn-Krall orthogonal polynomials. For $a+b+1,a+b+N+1\neq-1,-2,\ldots$ we write $(h_n^{a,b,N})_n$ for the sequence of Hahn polynomials defined by
\begin{equation}\label{HP}
h_n^{a,b,N}(x)=\sum_{j=0}^n\frac{(-x)_j(N-n+1)_{n-j}(a+b+1)_{j+n}(a+j+1)_{n-j}}{(2+a+b+N)_n(n-j)!j!},\quad n\ge 0
\end{equation}
(see \cite{KLS}, pp, 234-7; we have taken a different normalization that in \cite{DdI2} since we are going to deal here with the case when $a\in\{-1,-2,\cdots \}$).

When $N+1$ is a positive integer, $a,b\neq -1,-2,\ldots,-N$, and $a+b\neq -1,-2,\ldots,-2N-1$, the first $N+1$ Hahn polynomials are orthogonal with respect to the Hahn measure
\begin{equation}\label{pesoH}
    \rho_{a,b,N}=\sum_{x=0}^N\frac{\Gamma(a+x+1)\Gamma(N-x+b+1)}{x!(N-x)!}\delta_x,
\end{equation}
and have non-null norms. The discrete measure $\rho_{a,b,N}$ is positive only when $a,b>-1$ or $a,b<-N$.

We also need the monic dual Hahn polynomials, for $n\geq0$,
\begin{equation}\label{dHP}
   R_n^{a,b,N}(x)=\sum_{j=0}^n\frac{(-n)_j(-N+j)_{n-j}(a+j+1)_{n-j}}{(-1)^j j!}\prod_{i=0}^{j-1}[x-i(i+a+b+1)].
\end{equation}
Notice that $R_n^{a,b,N}$ is always a polynomial of degree $n$.

Krall-Hahn measures can be constructed by using the following Christoffel transforms of the Hahn measure: for $a,b\not =-1,-2,\cdots $, $N$ a positive integer
and a quartet $\F=(F_1,F_2,F_3,F_4)$ of finite sets $F_i$, $i=1,2,3,4$, of positive integers, define the measure
\begin{equation}\label{hqs}
\rho _{a,b,N}^{\F}=\prod_{f\in F_1}(b+N+1+f-x)\prod_{f\in F_2}(x+a+1+f)\prod_{f\in F_3}(N-f-x)\prod_{f\in F_4}(x-f)\rho _{a,b,N},
\end{equation}
where $\rho _{a,b,N}$ is the Hahn measure (\ref{pesoH}).

It was conjectured in \cite{du0} and proved in \cite{du1} and \cite{DdI2} (see also \cite{DdI3}) that the orthogonal polynomials with respect to the measure $\rho _{a,b,N}^{\F}$ (when there exist) are also eigenfunctions of a higher order differential operator of the form (\ref{hodo}).
The result is also valid when $N$ is not a positive integer. The assumption of $N$ being a positive integer is only necessary for having
an explicit expression of the Hahn weight and include the most interesting case when the Krall-Hahn measure (\ref{hqs}) is positive. This is the reason why
we only consider the case $N$ being a positive integer in this paper.

The case $a$ or $b$ in $\{-1,-2,\cdots \}$ was neither considered in \cite{du0} nor in the subsequence papers because the Hahn measure is not well defined since the Gamma function has poles at the non-positive integers. Since the situation is similar to the Meixner case, we can also manage this case under similar assumptions on the finite sets $F_2$ and $F_4$, when $a\in \{-1,-2,\cdots \}$, or on the finite sets $F_1$ and $F_3$, when $b\in \{-1,-2,\cdots \}$, respectively.

In order to construct the sequence of orthogonal polynomials, we need some previous notations. Consider the polynomials
\begin{align}\label{thh}
\theta_n^{u}&=n(n+u+1),\\\nonumber
\psi_{r}^{u}(x)&=\prod_{h=1}^{r-1}\prod_{i=1}^h(2x+u-i-h),\\\label{phr}
\phi_{r,r_1}^{a}(x)&=\prod_{i=1}^{r_1-1}(x+a-r+1)_{r_1-i}.
\end{align}
Notice that for $u=a+b$ the polynomial (\ref{thh}) is the sequence of eigenvalues of the Hahn polynomials with respect to its associated second order difference operator. We also need the rational functions
\begin{equation}\label{xit}
\xi_{x,j}^h=\begin{cases} \displaystyle(-1)^j\frac{(x-j-N)_j(x-j+a+1)_j}{(x-j+a+b+N+2)_j},& \mbox{for $h=1$,}\\
\displaystyle\frac{(x-j+b+1)_j(x-j-N)_j}{(x-j+a+b+N+2)_j},& \mbox{for $h=2$,}\\
(x-j+a+1)_j,& \mbox{for $h=3$,}\\
\displaystyle(-1)^j(x-j+b+1)_j,& \mbox{for $h=4$,}
 \end{cases}
\end{equation}
which are related to the four $\D$-operators for the Hahn polynomials (see \cite[Section 7]{du1}, \cite[Section 5]{DdI2}; take into account the normalization of the Hahn polynomials we are considering in this paper).

Finally, we consider the polynomials $q_n^{a,b, N;\F}$  defined as
follows: write $G_i=I(F_i)$, $i=1,2,3,4$, where $I$ is the involution defined in (\ref{dinv}); write $m_i$ for the number of elements of $G_i$ and $m=\sum_{i=1}^4m_i$, and define
\begin{equation}\label{qusch}
q_n^{a,b,N;\F}(x)=\frac{\left|
  \begin{array}{@{}c@{}lccc@{}c@{}}
   &(-1)^{j-1}h_{n+1-j}^{a,b,N}(x-\max F_4-1) &&\hspace{-1.3cm}{}_{j=1,\ldots , m+1} \\
    \dosfilas{ \xi_{n-j+1,m-j+1}^1R_{g}^{-b,-a,a+b+N}(\theta_{-n+j-2}^{-a-b}) }{g\in G_1} \\
    \dosfilas{ \xi_{n-j+1,m-j+1}^2R_{g}^{-a,-b,a+b+N}(\theta_{-n+j-2}^{-a-b})}{g\in G_2}
    \\
    \dosfilas{ \xi_{n-j+1,m-j+1}^3R_{g}^{-b,-a,-2-N}(\theta_{-n+j-2}^{-a-b})  }{g\in G_3}\\
    \dosfilas{\xi_{n-j+1,m-j+1}^4R_{g}^{-a,-b,-2-N}(\theta_{-n+j-2}^{-a-b}) }{g\in G_4}
  \end{array}\right|}{\phi_{m,m_1+m_3}^{a}(n)\phi_{m,m_2+m_4}^{b}(n)\psi_{m}^{a+b}(n)}.
\end{equation}
Notice that the degree of $q_n^{a,b,N;\F}$ is $n$ if and only if $\Omega_\F^{a,b,N;\F}(n)\not =0$, where
\begin{equation}\label{omegh}
\Omega_\F^{a,b,N;\F}(n)=\frac{\left|
  \begin{array}{@{}c@{}lccc@{}c@{}}
   & &&\hspace{-1.3cm}{}_{j=1,\ldots , m} \\
    \dosfilas{ \xi_{n-j,m-j}^1R_{g}^{-b,-a,a+b+N}(\theta_{-n+j-1}^{-a-b})}{g\in G_1} \\
    \dosfilas{ \xi_{n-j,m-j}^2R_{g}^{-a,-b,a+b+N}(\theta_{-n+j-1}^{-a-b})}{g\in G_2}
    \\
    \dosfilas{ \xi_{n-j,m-j}^3R_{g}^{-b,-a,-2-N}(\theta_{-n+j-1}^{-a-b})  }{g\in G_3}\\
    \dosfilas{\xi_{n-j,m-j}^4R_{g}^{-a,-b,-2-N}(\theta_{-n+j-1}^{-a-b}) }{g\in G_4}
  \end{array}\right|}{\phi_{m,m_1+m_3}^{a}(n)\phi_{m,m_2+m_4}^{b}(n)\psi_{m}^{a+b}(n)}.
\end{equation}
When $a$ and $b$ are negative integers, the rational functions $\xi_{n-j+1,m-j+1}^h$ can vanish for certain $n$, $0\le n\le m$, with the consequence that $q_n^{a,b,N;\F}$=0. It is not difficult to see, that this problem can be avoided by normalizing the polynomials $q_n^{a,b,N;\F}$ 
and the rational function $\Omega_\F^{a,b,N;\F}$ as in (\ref{qusch}) and (\ref{omegh}), that is, dividing by  $\phi_{m,m_1+m_3}^{a}(n)\phi_{m,m_2+m_4}^{a}(n)\psi_{m}^{a+b}(n)$ (see \cite[p. 205]{DdI4}, where we used the same procedure to construct Jacobi-Sobolev orthogonal polynomials).

We also assume that $N$ is big enough, to avoid problems with the denominator $(n-m+a+b+N+2)_{m-j+1}$ of $\xi_{n-j+1,m-j+1}^h$, $h=1,2$ (it is enough to assume $N\ge -2-a-b$). As explained in \cite[Remark 6.5]{DdI2}, there are different sets $F_3$ and $F_4$ for which the measures $\rho _{a,b,N}^{\F}$
(\ref{hqs}) are equal. Each of these possibilities provides a different representation for the orthogonal polynomials with respect to $\rho _{a,b,N}^{\F}$ in the form (\ref{qusch}) and a different higher-order difference operator with respect to which they are eigenfunctions. It is not difficult to see that only one of these possibilities satisfies the condition $\max F_{3}, \max F_{4}<N/2$. This is the more interesting choice because it minimizes the order  of the associated higher-order difference operator. Hence, we also assume $\max F_{3}, \max F_{4}<N/2$.

We then have the following Theorems (the proofs are omitted because are analogous to that of Theorem \ref{1th}).
\begin{theorem}\label{h1th} Let $\hat a, \hat b$ be real numbers with $\hat a\in \{-1,-2,\cdots \}$ and $\hat b\not \in \{-1,-2,\cdots \}$, and let $\F$ be a quartet of finite sets $F_i$, $i=1,2,3,4$, of positive integers satisfying
\begin{equation}\label{hf2h}
\{0,1,2,\cdots , -\hat a-1\}\subset F_4\cup (-\hat a-1-F_2).
\end{equation}
Write $\Hh=\Hh(-\hat a-1,F_4,F_2)$ (see (\ref{spmh})) and
\begin{align*}
a=\hat a+\max F_2&+\max F_4+2,\quad b=\hat b+\max F_1+\max F_3+2,\\ & \tilde N=N-\max F_3-\max F_4-2.
\end{align*}
Consider the sequence of polynomials $(q_n^{a,b,\tilde N;\F})_n$ defined by (\ref{qusch}).
Assume that
\begin{equation}\label{hf2hi}
\Omega_\F^{a,b,\tilde N;\F}(n)\not =0,\quad n=0,1,2,\cdots , \tilde N+m_3+m_4+1.
\end{equation}
Then the polynomials $q_n^{a,b,\tilde N;\F}$, $n=0,1,\cdots ,\tilde N+m_3+m_4$, are orthogonal with respect to the measure
\begin{equation}\label{elr1}
\nu ^{\hat b,N}_{\hat a;\F}=\sum _{x=0;x\not \in F_4}^N \prod _{h\in \Hh}(x-h)\prod_{f\in F_1}(N-x+\hat b+1+f)\prod_{f\in F_3}(N-f-x)\frac{\Gamma(N-x+\hat b+1)}{(N-x)!}\delta_x.
\end{equation}
Moreover the polynomials $q_n^{a,b,\tilde N;\F}$, $n=0,1,\cdots ,\tilde N+m_3+m_4$, are eigenfunctions of a difference operator of the form (\ref{hodo}) with $-s=r=1+\sum_{i=1}^4\left(\sum_{f\in F_i}f-\binom{k_i}{2}\right)$.
\end{theorem}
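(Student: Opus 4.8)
The plan is to follow, step by step, the proof of Theorem \ref{1th}, replacing each Meixner ingredient by its Hahn counterpart. The ingredients needed are: (i) an invariance identity for the quasi Casorati-Hahn determinant $\Omega_\F^{a,b,N;\F}$ of (\ref{omegh}) analogous to (\ref{duc}), expressing it, up to an explicit non-vanishing factor depending only on $\F$, in terms of the determinant built from the dual quartet $\G=I(\F)$; (ii) the Hahn counterpart of Theorem 3.2 of \cite{DdI} together with the formulas for the four $\D$-operators of the Hahn polynomials recorded in \cite[Section 7]{du1} and \cite[Section 5]{DdI2}; (iii) the explicit orthogonality and norm of the polynomials $q_n^{a,b,\tilde N;\F}$ with respect to the Krall-Hahn measure built from the perturbed parameters (the Hahn counterpart of Theorem 6.2 of \cite{DdI}, see \cite{DdI2}); and (iv) the Hahn analogue of Lemma \ref{1le}, asserting that if $\hat a_s\notin\{-1,-2,\cdots\}$ and $\hat a_s\to\hat a$ then the corresponding Krall-Hahn measures converge pointwise on $\{0,1,\cdots,N\}$ to $\nu^{\hat b,N}_{\hat a;\F}$. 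This last lemma I would state and prove first, following verbatim the proof of Lemma \ref{1le}.

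Granting these ingredients I would argue as follows. By (i) the hypothesis (\ref{hf2hi}) is equivalent to the non-vanishing of the dual determinant at the relevant arguments, so the normalizing denominator $\phi_{m,m_1+m_3}^{a}(n)\phi_{m,m_2+m_4}^{b}(n)\psi_{m}^{a+b}(n)$ in (\ref{qusch}) does not cancel the top degree, and $q_n^{a,b,\tilde N;\F}$ has degree exactly $n$ for $n=0,1,\cdots,\tilde N+m_3+m_4$. The eigenfunction property for a difference operator of the form (\ref{hodo}) with $-s=r=1+\sum_{i=1}^4\left(\sum_{f\in F_i}f-\binom{k_i}{2}\right)$ then follows from (ii), after the usual re-normalization of the polynomials, exactly as in Theorem \ref{1th}. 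Next, viewing $\Omega_\F^{a,b,\tilde N;\F}$ as a function of $\hat a$, it is by (i) a non-zero rational function with polynomial numerator, so Hurwitz's theorem provides a sequence $\hat a_s\notin\{-1,-2,\cdots\}$ (chosen also to avoid the finitely many other exceptional values of the Hahn measure) with $\hat a_s\to\hat a$, for which the analogue of (\ref{hf2hi}) holds for every $s$; for such $\hat a_s$ the relevant Krall-Hahn measures are well defined and the polynomials $q_n^{a_s,b,\tilde N;\F}$ are their orthogonal polynomials by (iii). The corresponding norm, rewritten by (i) as in (\ref{nmqt}), is a constant depending only on $\F$ times a product of Gamma functions (with arguments linear in $n$) times $\Omega_\F^{a_s,b,\tilde N;\F}(n)\,\Omega_\F^{a_s,b,\tilde N;\F}(n+1)$, and (\ref{hf2h}) keeps that Gamma factor finite and non-zero in the limit. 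Finally, the entries of (\ref{qusch}) and (\ref{omegh}) depend continuously on $\hat a_s$, so $q_n^{a_s,b,\tilde N;\F}\to q_n^{a,b,\tilde N;\F}$ and $\Omega_\F^{a_s,b,\tilde N;\F}\to\Omega_\F^{a,b,\tilde N;\F}$ uniformly on compacta; by (iv) the measures converge; and since all the measures are supported on the fixed finite set $\{0,1,\cdots,N\}$, each inner product is a finite sum over $x$, so the orthogonality relations and the norm formula pass to the limit termwise and no domination argument is needed (cf.\ Lemma \ref{lemi}). This yields the orthogonality of $q_n^{a,b,\tilde N;\F}$, $n=0,1,\cdots,\tilde N+m_3+m_4$, with respect to $\nu^{\hat b,N}_{\hat a;\F}$, together with the stated norm.

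I expect the main work to be ingredient (iv): identifying the limit measure precisely as (\ref{elr1}). As in Lemma \ref{1le}, one would split the mass points $x\in\{0,1,\cdots,N\}$ into the regimes $x\in F_4$ (where both measures vanish), $0\le x\le-\hat a-1$ with $x\notin F_4$ (where, using (\ref{hf2h}), the factor $\Gamma(x+\hat a_s+1)$, which blows up as $\hat a_s\to\hat a$, combines with a simple zero of the factor coming from $F_2$ to produce a finite limit), and $x\ge-\hat a$; in the middle regime this cancellation produces exactly the combinatorial set $\Hh=\Hh(-\hat a-1,F_4,F_2)$ of (\ref{spmh}), while the factors attached to $F_1$ and $F_3$ survive unchanged, which is why (\ref{elr1}) mixes a Christoffel-type part coming from $F_1$ and $F_3$ with the product $\prod_{h\in\Hh}(x-h)$ and the deletion of the points of $F_4$. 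A secondary point, already anticipated in the normalization discussion preceding the statement (cf.\ \cite[p.\ 205]{DdI4}), is to check that the denominator of (\ref{qusch}) and the denominators $(n-m+a+b+\tilde N+2)_{m-j+1}$ occurring in the rational functions $\xi$ do not vanish along the chosen sequence; this is arranged by the same choice of $\hat a_s$ together with the standing lower bound on $\tilde N$ and the size assumptions on $\max F_3,\max F_4$. The rest is a routine transcription of the Meixner argument, the Hahn bookkeeping being already encoded in the parameters $a,b,\tilde N$ and in the reduction remarks preceding the theorem.
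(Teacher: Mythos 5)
Your proposal is correct and follows exactly the route the paper intends: the paper omits the proof of Theorem \ref{h1th}, stating only that it is analogous to that of Theorem \ref{1th}, and your reconstruction (duality/invariance for the Casorati--Hahn determinant, the $\D$-operator machinery of \cite{du1,DdI2}, a Hahn analogue of Lemma \ref{1le} for the limit of the measures, and passage to the limit along a sequence $\hat a_s\to\hat a$ chosen via Hurwitz) is precisely that analogy. Your observations that the finite support of the Hahn measure makes the dominated-convergence step of Lemma \ref{lemi} unnecessary and that only finitely many non-vanishing conditions must be preserved along the sequence are correct simplifications relative to the Meixner case.
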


The case when $\hat a\not \in \{-1,-2,\cdots \}$ and $\hat b
\in \{-1,-2,\cdots \}$ is analogous because of the symmetry of the Hahn measure with respect to $a$ and $b$.

When $\hat a, \hat b \in \{-1,-2,\cdots \}$, we have the following Theorem.

\begin{theorem}\label{h2th} Let $\hat a,\hat b\in \{-1,-2,\cdots \}$, and let $\F$ be a quartet of finite sets $F_i$, $i=1,2,3,4$, of positive integers satisfying
\begin{align*}\label{hf2h2}
\{0,1,2,\cdots , -\hat a-1\}&\subset F_4\cup (-\hat a-1-F_2),\\
\{0,1,2,\cdots , -\hat b-1\}&\subset F_3\cup (-\hat b-1-F_1).
\end{align*}
Write $\Hh_p=\Hh(-\hat a-1,F_4,F_2)$ and $\tilde \Hh_i=\Hh(-\hat b-1,F_3,F_1)$ (see (\ref{spmh})) and
\begin{align*}
a=\hat a+\max F_2&+\max F_4+2,\quad b=\hat b+\max F_1+\max F_3+2,\\ & \tilde N=N-\max F_3-\max F_4-2.
\end{align*}
Consider the sequence of polynomials $(q_n^{a,b,\tilde N;\F})_n$ defined by (\ref{qusch}).
Assume that
\begin{equation}\label{hf2hj}
\Omega_\F^{a,b,\tilde N;\F}(n)\not =0,\quad n=0,1,2,\cdots , \tilde N+m_3+m_4+1.
\end{equation}
Then the polynomials $q_n^{a,b,\tilde N;\F}$, $n=0,1,\cdots ,\tilde N+m_3+m_4$, are orthogonal with respect to the measure
\begin{equation}\label{elr}
\nu ^{N}_{\hat a,\hat b;\F}=\sum _{\substack{x=0\\x\not \in F_4\\x\not \in N-F_3}}^N \prod _{h\in \Hh_p}(x-h)\prod _{h\in \Hh_i}(N-x-h) \delta_x.
\end{equation}
Moreover the polynomials $q_n^{a,b,\tilde N;\F}$, $n=0,1,\cdots ,\tilde N+m_3+m_4$, are eigenfunctions of a difference operator of the form (\ref{hodo}) with $-s=r=1+\sum_{i=1}^4\left(\sum_{f\in F_i}f-\binom{k_i}{2}\right)$.
\end{theorem}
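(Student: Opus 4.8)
The plan is to follow, step by step, the proof of Theorem \ref{1th}, replacing the Meixner limit Lemma \ref{1le} by its Hahn counterpart and the invariance identity (\ref{duc}) by the corresponding quasi Casorati invariance for Hahn determinants. \textbf{Step 1 (the target measure as a limit of Christoffel transforms).} First I would establish the Hahn analogue of Lemma \ref{1le}: for sequences $a_s,b_s\not\in\{-1,-2,\cdots\}$ with $a_s\to\hat a$, $b_s\to\hat b$, one has $\rho_{a_s,b_s,N}^{\F}(x)\to\nu^{N}_{\hat a,\hat b;\F}(x)$ for every $x\in\{0,1,\cdots,N\}$. Since the support is finite this is a purely pointwise computation, carried out by the same case analysis as in Lemma \ref{1le}: one distinguishes the position of $x$ (resp.\ of $N-x$) relative to $F_4$ and $-\hat a-1-F_2$ (resp.\ $F_3$ and $-\hat b-1-F_1$), which control the pole of $\Gamma(\hat a+x+1)$ (resp.\ $\Gamma(N-x+\hat b+1)$) and its cancellation by the factor $\prod_{f\in F_2}(x+\hat a_s+1+f)$ (resp.\ $\prod_{f\in F_1}(N-x+\hat b_s+1+f)$). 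The two containment conditions in the statement are exactly what is needed for each unwanted pole to be cancelled, producing the finite sets $\Hh_p$ and $\Hh_i$ of (\ref{elr}).

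\textbf{Step 2 (degree and difference operator).} Next, using the quasi Casorati invariance for Hahn determinants (the Hahn analogue of (\ref{duc}), from \cite{ducu} and Section 5 of \cite{DdI2}), I would rewrite $\Omega_\F^{a,b,\tilde N;\F}$ in its dual form and check that hypothesis (\ref{hf2hj}) is precisely the condition forcing $\deg q_n^{a,b,\tilde N;\F}=n$ for $n=0,\cdots,\tilde N+m_3+m_4$. The statement that these polynomials are eigenfunctions of a difference operator of the form (\ref{hodo}) of the asserted order is then a direct consequence of the general Krall-Hahn construction of \cite{du1} and \cite{DdI2} (together with the formulas for the four $\D$-operators for the Hahn polynomials recalled before the statement), applied to $q_n^{a,b,\tilde N;\F}$ after the renormalization by $\phi_{m,m_1+m_3}^{a}(n)\phi_{m,m_2+m_4}^{b}(n)\psi_{m}^{a+b}(n)$ built into (\ref{qusch}); one only has to verify that this renormalization is compatible with the change of parameters $\tilde N=N-\max F_3-\max F_4-2$ and with the choice $\max F_3,\max F_4<N/2$ selecting the minimal-order operator.

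\textbf{Step 3 (orthogonality by a limiting argument).} The nodes in (\ref{hf2hj}) form a finite set, so by continuity of $(a,b)\mapsto\Omega_\F^{a,b,\tilde N;\F}(n)$ at $(\hat a,\hat b)$ we may take the perturbations $a_s,b_s$ of Step 1 close enough to $\hat a,\hat b$ that $\Omega_\F^{a_s,b_s,\tilde N;\F}(n)\neq 0$ for all $n=0,\cdots,\tilde N+m_3+m_4+1$ (no Hurwitz argument is even needed, since only finitely many values intervene and they are nonzero in the limit). For such $a_s,b_s$, the known Krall-Hahn result of \cite{DdI2} gives that $q_n^{a_s,b_s,\tilde N;\F}$, $n=0,\cdots,\tilde N+m_3+m_4$, is the $n$-th orthogonal polynomial with respect to the Christoffel transform $\rho_{a_s,b_s,N}^{\F}$, with an explicit norm that is a product of $\Omega_\F^{a_s,b_s,\tilde N;\F}(n)\,\Omega_\F^{a_s,b_s,\tilde N;\F}(n+1)$ and a ratio of Gamma factors. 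Letting $s\to\infty$: the polynomials converge coefficientwise to $q_n^{a,b,\tilde N;\F}$, the measures converge by Step 1, and a finite-support version of Lemma \ref{lemi} then yields orthogonality of $q_n^{a,b,\tilde N;\F}$ with respect to $\nu^{N}_{\hat a,\hat b;\F}$; the eigenfunction property is inherited in the limit because the coefficients of the difference operator are rational in $(a,b)$ and stay regular at $(\hat a,\hat b)$.

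\textbf{Main obstacle.} The delicate point, relative to the Meixner Theorem \ref{1th}, is that two Gamma functions are now involved: the limits $a\to\hat a$ and $b\to\hat b$ must be handled simultaneously, and the bookkeeping of the mass points removed near $0$ (through $F_4$ and $-\hat a-1-F_2$) must be carried out in parallel with those removed near $N$ (through $F_3$ and $-\hat b-1-F_1$), making sure that $\Hh_p$, $\Hh_i$ come out as stated and that the supports match. A secondary technicality is that the normalizing denominator $\phi_{m,m_1+m_3}^{a}\phi_{m,m_2+m_4}^{b}\psi_{m}^{a+b}$ can vanish at integer $a,b$, so one must check that the determinant in (\ref{qusch}) vanishes to matching order and hence $q_n^{a,b,\tilde N;\F}$ and $\Omega_\F^{a,b,\tilde N;\F}$ extend analytically with the correct limits — precisely the issue flagged in the paragraph preceding the statement and handled as in \cite[p.~205]{DdI4}.
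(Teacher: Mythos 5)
Your proposal is correct and is essentially the proof the paper intends: the paper omits the argument for Theorem \ref{h2th}, stating only that it is analogous to Theorem \ref{1th}, and your three steps (a Hahn analogue of Lemma \ref{1le} for the pointwise limit of the Christoffel transforms, the quasi-Casorati invariance plus the $\D$-operator machinery of \cite{du1,DdI2} for the degree and the difference equation, and a limiting argument via Lemma \ref{lemi} for the orthogonality) are exactly that analogy carried out, with the two Gamma factors decoupling because their poles lie near $x=0$ and $x=N$ respectively. Your observation that the Hurwitz step can be replaced by plain continuity, since only the finitely many nodes $n=0,\dots,\tilde N+m_3+m_4+1$ intervene, is a small but genuine simplification over the Meixner template.
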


As for the Krall-Meixner polynomials (see Corollary \ref{eles}), it can be proved that the positivity of the measure $\nu ^{N}_{\hat a,\hat b;\F}$ in Theorems \ref{h1th} and \ref{h2th} implies the assumptions (\ref{hf2hi}) and (\ref{hf2hj}), respectively, and hence we can construct a sequence of orthogonal polynomials with respect to the positive measure $\nu ^{N}_{\hat a,\hat b;\F}$ using (\ref{qusch}).

For two nonnegative integers $c$ and $N$, a real number $d$, $d\not \in \{-1,-2,\cdots \}$ and a finite set $A$ of positive integers with $\max A\le N$, consider the measure $\rho_{c,d,N}^A$ obtained from the Hahn measure $\rho_{c,d,N}$ (\ref{pesoH}) by removing the mass at the points belonging to $A$. Proceeding as in the Remark \ref{rm4}, it is easy to show that the measure $\rho_{c,d,N}^A$ can be represented by a measure of the form (\ref{elr1}), where $\hat b=d$, $F_1=F_3=\emptyset$, $F_4=A$, $\hat a=-\max A-1$ (and then $\hat a\in\{-2,-3,\cdots \}$) and certain finite set $F_2$ of positive integers satisfying (\ref{hf2h}). Hence we can construct a sequence of orthogonal polynomials with respect to the positive measure  $\rho_{c,d,N}^A$ using (\ref{qusch}), and moreover these orthogonal polynomials are eigenfunctions of a higher order difference operator of the form (\ref{hodo}).

In the same way, assuming that $c,d,N$ are nonnegative integers and $A$ and $B$ two sets of positive integers with $\max A,\max B<N/2$, the measures $\rho_{c,d,N}^{A,B}$ obtained from the Hahn measure $\rho_{c,d,N}$ (\ref{pesoH}) by removing the mass at the points belonging to $A$ and $N-B$, can be represented by a measure of the form (\ref{elr}), so that Theorem \ref{h2th} can be applied to $\rho_{c,d,N}^{A,B}$.

\bigskip

\noindent
\textit{Mathematics Subject Classification: 42C05, 33C45, 33E30}

\noindent
\textit{Key words and phrases}: Orthogonal polynomials. Krall discrete polynomials Exceptional orthogonal polynomial. 
Meixner polynomials.  Laguerre polynomials. Hahn polynomials.

     \end{document}